\newtheorem{theorem}{Theorem}[section]
\newtheorem{lemma}[theorem]{Lemma}
\newtheorem{corollary}[theorem]{Corollary}
\newtheorem{proposition}[theorem]{Proposition}
\newtheorem{conjecture}[theorem]{Conjecture}
\theoremstyle{definition}
\newtheorem{definition}[theorem]{Definition}
\theoremstyle{remark}
\newtheorem{example}[theorem]{Example}
\newtheorem{remark}[theorem]{Remark}
\newcommand{\keywords}{\textbf{Key words. }\medskip}
\newcommand{\subjclass}{\textbf{MSC 2020. }\medskip}
\renewcommand{\abstract}{\textbf{Abstract. }\medskip}
\numberwithin{equation}{section}
\begin{document}

\author{Oleksiy Dovgoshey, Olga Rovenska}

\title{\bf Forbidden Four Cycle, Star Graphs and Isometric Embeddings}

\maketitle

\begin{abstract}
We prove the necessary and sufficient conditions under which
ultrametric spaces of arbitrary infinite cardinality admit isometric embeddings
into ultrametric spaces generated by labeled star graphs.

\end{abstract}

\subjclass{Primary 54E35, Secondary 54E4.}

\keywords{Complete multipartite graph, diametrical graph, discrete ultrametric space,  four-point condition, labeled star graph.}

\section{Introduction}

Over the past several years, ultrametric spaces have been studied using models based on labeled trees. In the finite case, every ultrametric space can be represented up to isometry by tree with non-negative vertex labeling, known as the Gurvich–Vyalyi tree \cite{GV2012DAM}. This representation and its geometric interpretation \cite{PD2014JMS} have led to solutions of various problems in finite ultrametric spaces \cite{DP2019PNUAA,DP2020pNUAA,DPT2015,DPT2017FPTA,Pet2022pANUAA}.
Recently, an analogue of Gurvich--Vyalyi representation was obtained for totally bounded ultrametric spaces \cite{Dovg2025}.

The concept of an ultrametric space generated by an arbitrary non-negative vertex labeling on both finite and infinite trees were introduced in \cite{Dov2020TaAoG} and investigated in \cite{DK2024DLPSSAG,DK2022LTGCCaDUS,DR2025Arxx,DV2025JMS}. König's Infinity Lemma \cite{Konig} shows that infinite star-graphs and rays are fundamental to the construction of any infinite graph. A purely metric characterization of ultrametric spaces generated by labeled star graphs provides in \cite{DR2025USGbLSG}.
Compact ultrametric spaces generated by labeled star graphs are studied in \cite{DCR2025OJAC}. In particular, \cite{DCR2025OJAC} gives a criterion for when a compact ultrametric space admits an isometric embedding into an ultrametric space generated by a labeled star graph. These results set the stage for our work on isometric embeddings of arbitrary ultrametric spaces into those generated by star graphs. 

The main result of the present paper, Theorem~\ref{mnb}, establishes necessary and sufficient conditions for an infinite ultrametric space to admit an isometric embedding into an ultrametric space generated by a labeled star graph.

The proof of Theorem~\ref{mnb} is rather cumbersome, but it is based on the following simple facts. All ultrametric spaces generated by labeled star graphs
are complete and each metric space 
can be isometrically embedded into 
its completion. To prove Theorem~\ref{mnb}, we modify the 
metric of original ultrametric 
space so that the modified 
space contains a Cauchy sequence, 
and show that this 
space is isometrically 
embedded into space generated 
by labeled star graph if and 
only if the original space has 
this property. After that, the 
standard isometric embedding 
of the modified space into its 
completion is used to 
construct the desired isometric 
embedding of the original space. 

The last section of the paper presents several hypotheses that can be probably proven using Theorem~\ref{mnb}.

\section{Preliminaries. Metric spaces}

\hspace{5 mm} Let us denote by $\mathbb{R}^+$ the set $[0, \infty)$.

A \textit{metric} on a nonempty set $X$ is a function $d\colon X\times X\rightarrow \mathbb R^+$ satisfying the following conditions for all 
 \(x\), \(y\), \(z \in X\):
\begin{itemize}[left=10pt]
\item[(i)]  $d(x,y)=d(y,x)$,
\item[(ii)] $(d(x,y)=0)\iff (x=y)$,
\item[(iii)] \(d(x,y)\leq d(x,z) + d(z,y)\).
\end{itemize}

If instead of triangle inequality (iii) the {\it strong triangle inequality}
\begin{equation*}
d(x,y) \leq \max\{d(x,z), d(z,y)\}
\end{equation*}
holds for all $x,y,z \in X$, then $(X,d)$ is called an \emph{ultrametric space}.

\begin{definition}\label{d2.2}
Let \((X, d)\) and \((Y, \rho)\) be metric spaces. A mapping \(\Phi \colon X \to Y\) is called an {\it isometric embedding} of \((X, d)\) in \((Y, \rho)\) if
\[
d(x,y) = \rho(\Phi(x), \Phi(y))
\]
holds for all \(x\), \(y \in X\). In the case when \(\Phi\) is bijective, we say that it is an {\it isometry} of \((X, d)\) and \((Y, \rho)\). The metric spaces are {\it isometric} if there is an isometry of these spaces.
\end{definition}

Let \(S\) be a nonempty subset of a metric space \((X, d)\). The quantity
\begin{equation}\label{e2.1}
\operatorname{diam} S :=\sup\{d(x,y) \colon x, y\in S\}
\end{equation}
is the \emph{diameter} of \(S\). 

We define the \emph{distance set} \(D(X)\) of a metric space \((X,d)\) as
\[
D(X) :=  \{d(x, y) \colon x, y \in X\},
\]
and write 
\begin{equation}
    \label{smwuu}
D_0(X):=D(X)\setminus \{0\}.
\end{equation}
It is clear that \(\operatorname{diam} X = \sup D(X)\). Furthermore, using \eqref{e2.1} and the strong triangle inequality, it is easy to show 
that the equality
\begin{equation}
    \label{lam45}
\operatorname{diam} S = \sup \{ d(x_0,y) \colon y \in S \}
\end{equation}
holds for each nonempty \(S \subseteq X\) and every \(x_0 \in S\) if \((X,d)\) is an ultrametric space.

The following simple proposition seems to be new.  
\begin{proposition}\label{kkigds}
Let $(X,d)$ be an ultrametric space with $|X|\geq 2$.  
If $x_1$ and $x_2$ are two distinct points of $X$ such that
\begin{equation}
    \label{pemmbv1}
d(x_1,x_2) \leq d(y_1,y_2) 
\end{equation}
for all distinct points $y_1,y_2 \in X$,  
then the mapping $\Phi : X \to X$ defined, for every $x\in X$, as  
\begin{equation}
    \label{pemmbv2}
\Phi(x) :=
\begin{cases}
x_2, &  \text{if} \,\,\,x = x_1, \\[6pt]
x_1, &  \text{if} \,\,\,x = x_2, \\[6pt]
x,   & \text{otherwise}
\end{cases} 
\end{equation}
is a self-isometry of the space $(X,d)$.  
\end{proposition}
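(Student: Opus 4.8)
The plan is to verify directly that $\Phi$ preserves all pairwise distances, i.e.\ that $d(\Phi(x),\Phi(y)) = d(x,y)$ for all $x,y \in X$. Since $\Phi$ is visibly an involution (it swaps $x_1$ and $x_2$ and fixes everything else), it is automatically a bijection, so once the distance-preservation is established, $\Phi$ is a self-isometry in the sense of Definition~\ref{d2.2}. The only pairs on which $\Phi$ acts nontrivially involve $x_1$ or $x_2$, so the whole argument reduces to a finite case analysis.

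First I would dispose of the trivial cases. If neither $x$ nor $y$ equals $x_1$ or $x_2$, then $\Phi(x)=x$ and $\Phi(y)=y$, so the equality is immediate. The pair $\{x_1,x_2\}$ is mapped to $\{x_2,x_1\}$, so $d(\Phi(x_1),\Phi(x_2)) = d(x_2,x_1) = d(x_1,x_2)$ by symmetry of $d$. The substantive cases are when exactly one of the two arguments is $x_1$ or $x_2$ and the other, call it $z$, is a third point distinct from both. By symmetry of $d$ and of the construction, it suffices to treat a point $z \notin \{x_1,x_2\}$ paired against $x_1$: here $\Phi$ sends $(x_1,z)$ to $(x_2,z)$, and I must show $d(x_1,z) = d(x_2,z)$. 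The analogous identity for $x_2$ then follows by interchanging the roles of $x_1$ and $x_2$.

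The heart of the matter is therefore the claim that $d(x_1,z) = d(x_2,z)$ for every $z \notin \{x_1,x_2\}$, and this is where hypothesis~\eqref{pemmbv1} does its work. The key observation is that $d(x_1,x_2)$ is a minimal positive distance in the space. By the strong triangle inequality, $d(x_1,z) \le \max\{d(x_1,x_2), d(x_2,z)\}$. Since $z \ne x_1$, the distance $d(x_2,z)$ is positive, and by~\eqref{pemmbv1} we have $d(x_1,x_2) \le d(x_2,z)$, so the maximum equals $d(x_2,z)$ and hence $d(x_1,z) \le d(x_2,z)$. The reverse inequality is symmetric: $d(x_2,z) \le \max\{d(x_2,x_1),d(x_1,z)\} = d(x_1,z)$, again using $d(x_1,x_2) \le d(x_1,z)$ which holds because $z \ne x_2$ makes $d(x_1,z)$ a positive distance. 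Combining the two inequalities gives $d(x_1,z)=d(x_2,z)$, completing the analysis.

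I do not expect any serious obstacle here; the proposition is genuinely elementary once one recognizes that~\eqref{pemmbv1} makes $d(x_1,x_2)$ an isosceles ``absorbing'' value. The only point requiring a little care is ensuring that the distances being compared to $d(x_1,x_2)$ are strictly positive, so that~\eqref{pemmbv1}, which is stated only for \emph{distinct} points, actually applies; this is exactly why the hypotheses $z \ne x_1$ and $z \ne x_2$ are invoked at precisely the right moments. The strong triangle inequality, used in its symmetric pair of forms, then forces the two distances to coincide, which is the ultrametric phenomenon that the shortest edge can be freely relocated among the two points it joins without disturbing any other distance.
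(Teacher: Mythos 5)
Your proof is correct and uses essentially the same argument as the paper: the minimality of $d(x_1,x_2)$ combined with the strong triangle inequality forces $d(x_1,z)=d(x_2,z)$ for every $z\notin\{x_1,x_2\}$, the only difference being that the paper phrases this as a proof by contradiction while you derive the two inequalities directly. (One cosmetic slip: your distinctness justifications are swapped --- it is $z\neq x_2$ that lets you apply \eqref{pemmbv1} to the pair $(x_2,z)$, and $z\neq x_1$ for the pair $(x_1,z)$ --- but since $z\notin\{x_1,x_2\}$ both hold, nothing is affected.)
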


\begin{proof}
It follows from \eqref{pemmbv2} that $\Phi$ is a self-isometry 
if and only if the equality  
\begin{equation}
    \label{pemmbv3}
d(x,x_1) = d(x,x_2) 
\end{equation}
holds whenever 
\begin{equation}
\label{pemmbv4}
    x_1 \neq x \neq x_2.
\end{equation}
Suppose contrary that there exists $x \in X$ such that \eqref{pemmbv4} holds but  
\[
d(x,x_1) \neq d(x,x_2).
\]
Then, without loss of generality, we may assume
\begin{equation}
\label{pemmbv6}
d(x,x_1) < d(x,x_2). 
\end{equation}
Using \eqref{pemmbv1} with $y_1 = x$ and $y_2 = x_1$, we obtain the inequality
\[
d(x_1,x_2) \leq d(x,x_1).
\]
The last inequality and \eqref{pemmbv6} give us the strict inequality
\[
 \max \{ d(x_2,x_1), d(x_1,x) \}< d(x_2,x),
\]
which contradicts the strong triangle inequality
\[
d(x,x_2) \leq \max \{ d(x,x_1), d(x_1,x_2) \}.
\]
Equality \eqref{pemmbv3} follows.  
Thus $\Phi$ is a self-isometry of $(X,d)$. 

\end{proof}

\begin{corollary}\label{tbhjss}
Let an ultrametric space $(X,d)$ contain two different points $x_1, x_2$ such that \eqref{pemmbv1} holds for all distinct $y_1,y_2 \in X$, and let
$
    X_1 := X \setminus \{x_1\}, $
$
    X_2 := X \setminus \{x_2\}.
$
Then the ultrametric spaces $(X_1, d|_{X_1\times X_1})$ and $(X_2, d|_{X_2\times X_2})$ are isometric.
\end{corollary}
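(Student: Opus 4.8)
The plan is to deduce the corollary immediately from Proposition~\ref{kkigds}. Since the hypotheses of the corollary are precisely those of that proposition, I would first invoke it to obtain the self-isometry $\Phi\colon X\to X$ defined by \eqref{pemmbv2}, which interchanges $x_1$ and $x_2$ and fixes every other point of $X$.

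Next I would note that $\Phi$ is a bijection of $X$: the swap \eqref{pemmbv2} is plainly its own inverse, so it is an involution, in particular injective and surjective. Being a bijection, $\Phi$ restricts to a bijection of $X\setminus\{x_1\}$ onto $X\setminus\{\Phi(x_1)\}$. Here I would compute the image directly from \eqref{pemmbv2}: the point $x_2\in X_1$ is sent to $x_1$, while every $x\in X_1$ with $x\neq x_2$ (hence $x\neq x_1\neq x\neq x_2$) is fixed, so that
\[
\Phi(X_1)=\{x_1\}\cup\bigl(X\setminus\{x_1,x_2\}\bigr)=X\setminus\{x_2\}=X_2 .
\]
Consequently $\Phi|_{X_1}\colon X_1\to X_2$ is a bijection.

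Finally, because $\Phi$ preserves $d$ on all of $X$ by Proposition~\ref{kkigds}, its restriction preserves the restricted metrics; that is, $d(x,y)=d(\Phi(x),\Phi(y))$ for all $x,y\in X_1$. Combined with the bijectivity of $\Phi|_{X_1}$ onto $X_2$, this exhibits $\Phi|_{X_1}$ as an isometry of $(X_1,d|_{X_1\times X_1})$ onto $(X_2,d|_{X_2\times X_2})$ in the sense of Definition~\ref{d2.2}, which is exactly the claim. I do not expect any real obstacle: essentially all the work is already contained in Proposition~\ref{kkigds}, and the only step deserving an explicit line is the image computation $\Phi(X_1)=X_2$, which follows at once from the fact that $\Phi$ swaps $x_1,x_2$ and acts as the identity elsewhere.
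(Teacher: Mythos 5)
Your argument is correct and coincides with the paper's own proof: both invoke Proposition~\ref{kkigds} to obtain the self-isometry $\Phi$ defined by \eqref{pemmbv2}, observe that $\Phi(X_1)=X_2$, and conclude that the restriction $\Phi|_{X_1}$ is the required isometry. The only difference is that you spell out the image computation, which the paper leaves as "directly follows".
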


\begin{proof}
Let $\Phi : X \to X$ be the self-isometry of $(X,d)$ defined by \eqref{pemmbv2}.  
It directly follows from \eqref{pemmbv2} that $\Phi(X_1) = X_2$ holds.  
Hence the mapping 
$
    X_1 \ni x \mapsto \Phi(x) \in X_2
$
is an isometry of the spaces $(X_1, d|_{X_1\times X_1})$ and $(X_2, d|_{X_2\times X_2})$.
\end{proof}

\begin{proposition}
    \label{jhfdc}
    Let $(X,d)$ be metric space with $|X| \ge 2$. Then the following statements are equivalent:

\begin{itemize}[left=10pt]
\item[(i)] There exist two distinct points $x_1, x_2 \in X$ such that
\[
d(x_1,x_2) \leq d(y_1,y_2)
\]
for all distinct $y_1,\,y_2\in X$.

\item[(ii)] The set $D_0(X)$ contains the smallest element,
\[
\inf D_0(X) \in D_0(X).
\]
\end{itemize}

\end{proposition}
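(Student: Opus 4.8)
The plan is to recognize that this proposition is essentially a reformulation once one identifies the correct ambient set of distances. The bridge between the two statements is the elementary identity
\[
D_0(X) = \{\, d(y_1,y_2) \colon y_1,y_2\in X,\ y_1\neq y_2 \,\},
\]
which I would verify first. Indeed, by metric axiom (ii) a distance $d(a,b)$ vanishes exactly when $a=b$; hence the positive distances are precisely those realized by distinct pairs. Since $|X|\geq 2$, this set is nonempty, so that $\inf D_0(X)$ is meaningful. This is the only place where the metric structure genuinely enters, and it is the step I would be most careful to state explicitly, although it presents no real difficulty.

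With this identity in hand, both implications are immediate. For (i) $\Rightarrow$ (ii), given the distinct points $x_1,x_2$ from (i), I would set $m := d(x_1,x_2)$. The identity gives $m \in D_0(X)$, while the defining inequality $d(x_1,x_2)\leq d(y_1,y_2)$, ranging over all distinct $y_1,y_2$, says exactly that $m \leq r$ for every $r\in D_0(X)$. Thus $m$ is the least element of $D_0(X)$, and in particular $\inf D_0(X)=m\in D_0(X)$.

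For the converse (ii) $\Rightarrow$ (i), I would start from $m := \inf D_0(X)\in D_0(X)$. Membership in $D_0(X)$ together with the identity yields distinct points $x_1,x_2$ with $d(x_1,x_2)=m$. Because $m$ is simultaneously the infimum and an element of $D_0(X)$, it is the minimum, so $m\leq d(y_1,y_2)$ for every pair of distinct $y_1,y_2$, which is precisely condition (i).

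Since no nontrivial estimate is involved, I do not anticipate a genuine obstacle; the proof amounts to recognizing that ``there is a distance not exceeding any other positive distance'' and ``the set of positive distances attains its infimum'' are two phrasings of the same fact. The single point meriting attention is the role of axiom (ii) and the hypothesis $|X|\geq 2$, which together guarantee that $D_0(X)$ is a nonempty set whose elements are exactly the distances between distinct points.
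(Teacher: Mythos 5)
Your proposal is correct and follows essentially the same route as the paper, which simply observes that the equivalence follows directly from the definition $D_0(X)=D(X)\setminus\{0\}$; you merely spell out the identification of $D_0(X)$ with the distances between distinct pairs and the resulting minimum/infimum argument in full detail. No gap.
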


\begin{proof}
    The validity of the equivalence $(i)\Leftrightarrow (ii)$ follows directly from~\eqref{smwuu}. 
\end{proof}

Let \((X, d)\) be a metric space. An \emph{open ball} with a \emph{radius} \(r > 0\) and a \emph{center} \(c \in X\) is the set
\[
B_r(c) := \{x \in X \colon d(c, x) < r\}.
\]

The next lemma follows from Proposition 18.5 of \cite{Sch1984}.
\begin{lemma}\label{dcfghm}
Let $(X,d)$ be a finite ultrametric space and let $B_1, B_2$ be two different open balls in $(X,d)$. If $B_1 \cap B_2 \neq 0$ holds, then we have either $B_1 \subset B_2$ or $B_2 \subset B_1$. If $B_1$ and $B_2$ are disjoint, then the equality
\[
d(x_1, x_2) = \operatorname{diam}(B_1 \cup B_2)
\]
holds for all $x_1 \in B_1$ and $x_2 \in B_2$.
\end{lemma}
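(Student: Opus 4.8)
The plan is to reduce everything to the single structural fact that in an ultrametric space every point of an open ball serves as its center. Concretely, I would first prove that if $p \in B_r(c)$ then $B_r(p) = B_r(c)$: taking any $x \in B_r(p)$ and using $d(c,x) \le \max\{d(c,p), d(p,x)\} < r$ gives $B_r(p) \subseteq B_r(c)$, and the reverse inclusion follows by symmetry since $c \in B_r(p)$. Once this recentering lemma is available, both assertions become short.

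For the first assertion, write $B_1 = B_{r_1}(c_1)$ and $B_2 = B_{r_2}(c_2)$ and pick a common point $p \in B_1 \cap B_2$. Recentering both balls at $p$ gives $B_1 = B_{r_1}(p)$ and $B_2 = B_{r_2}(p)$, so if, say, $r_1 \le r_2$, then $B_1 \subseteq B_2$; since $B_1 \ne B_2$, the inclusion is strict, which is exactly the claimed nesting.

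For the second assertion I would work with the disjoint balls $B_1, B_2$ and an arbitrary pair $x_1 \in B_1$, $x_2 \in B_2$. Recentering $B_1$ at $x_1$ and invoking disjointness (so that $x_2 \notin B_1$) yields $d(x_1, x_2) \ge r_1$, and symmetrically $d(x_1,x_2) \ge r_2$. The main step is then an application of the isosceles principle: for any other $x_1' \in B_1$ we have $d(x_1, x_1') < r_1 \le d(x_1, x_2)$, and two uses of the strong triangle inequality on the triple $x_1', x_1, x_2$ force $d(x_1', x_2) = d(x_1, x_2)$. Repeating the argument in the second coordinate shows that $d(x_1, x_2)$ takes a single common value $c$ for all $x_1 \in B_1$ and $x_2 \in B_2$. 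Finally, since every within-ball distance is $< r_1 \le c$ or $< r_2 \le c$ while every cross distance equals $c$, the supremum defining $\operatorname{diam}(B_1 \cup B_2)$ is attained precisely by the cross pairs, giving $\operatorname{diam}(B_1 \cup B_2) = c = d(x_1, x_2)$.

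I expect the only real obstacle to be the bookkeeping in the isosceles step: one must combine disjointness with recentering to guarantee the \emph{strict} inequality $d(x_1,x_1') < d(x_1, x_2)$, since it is precisely this strictness that activates the strong triangle inequality and pins down the cross distance. Finiteness of $X$ is used only lightly, to ensure the relevant suprema are attained, so the core of the argument is essentially cardinality-free.
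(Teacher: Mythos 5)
Your proof is correct. Note, however, that the paper does not actually prove this lemma — it simply cites Proposition 18.5 of Schikhof's book — so there is no in-paper argument to compare against; what you have written is the standard self-contained proof. The recentering fact ($p \in B_r(c) \Rightarrow B_r(p) = B_r(c)$) is established correctly, the nesting claim follows cleanly (with strictness of the inclusion coming from $B_1 \neq B_2$ as sets), and the isosceles step is handled properly: the strict inequality $d(x_1,x_1') < r_1 \le d(x_1,x_2)$ is exactly what is needed to force $d(x_1',x_2) = d(x_1,x_2)$ via two applications of the strong triangle inequality. One small correction to your closing remark: finiteness of $X$ is not needed even "lightly," since you show all cross distances equal a single constant $c$ while all within-ball distances are strictly below $c$, so the supremum defining $\operatorname{diam}(B_1 \cup B_2)$ is attained (and equals $c$) for purely structural reasons; the lemma and your proof are valid for arbitrary ultrametric spaces, which is consistent with the fact that the paper later applies this lemma to balls in infinite spaces (e.g.\ in the proofs of Lemma~\ref{gfjhkg} and Lemma~\ref{476}) despite the hypothesis of finiteness in the statement.
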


\begin{definition}
\label{alfth}
Let $(X,d)$ be a metric space. If for every 
 $x \in X$ there exists $r > 0$ such that  
\[
\left|B_r(x) \right|= 1,
\]
then
 the metric space $(X, d)$ is called \textit{discrete}.

\end{definition}

Following \cite[p.~48]{Dez-Dez}, we will say that a metric space $(X,d)$ is 
{\it metrically discrete} if there exists $t>0$ such that 
\begin{equation}\label{mytr}
d(x,y) \geq t 
\end{equation}
for all distinct $ x, y \in X.$

It is clear that every metrically discrete space is discrete, but not conversely in general.

The next proposition will be used in Section~5 below.

\begin{proposition}\label{qsswd}
Let $(X,d)$ be a discrete metric space.  
Then the following statements are equivalent:
\begin{itemize}[left=10pt]
    \item[(i)] $(X,d)$ is not metrically discrete.
    \item[(ii)] The equality
    \begin{equation}
        \label{llkk}
        \inf D_0(X) =0
        \end{equation}
    holds, where $D_0(X)$ is defined by~\eqref{smwuu}.
    \item[(iii)] There are a sequence $(x_n)_{n\in\mathbb{N}}$ of distinct points of $X$ 
    and a strictly decreasing sequence $(r_n)_{n\in\mathbb{N}}$ of  positive real numbers 
    such that 
    $$
    \lim\limits_{n\to\infty}r_n=0
    $$
    and
$$
\left|B_{r_n}(x_n)\right|=1,\quad \left|B_{kr_n}(x_n)\right|\geq 1
$$
    for each $n\in {\mathbb N}$ and every $k\in (1,\infty).$
\end{itemize}
\end{proposition}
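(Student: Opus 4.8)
The plan is to split the three-way equivalence into $(i)\Leftrightarrow(ii)$ and $(ii)\Leftrightarrow(iii)$, treating the first as an essentially formal unwinding of the definitions. By \eqref{mytr}, the space $(X,d)$ is metrically discrete exactly when there is a $t>0$ with $t\le d(x,y)$ for all distinct $x,y\in X$, and by the description \eqref{smwuu} of $D_0(X)$ this says precisely that $\inf D_0(X)\ge t>0$. Since $\inf D_0(X)\ge 0$ always holds, the failure of metric discreteness is equivalent to $\inf D_0(X)=0$, which is \eqref{llkk}. Thus $(i)\Leftrightarrow(ii)$ is immediate.

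For $(ii)\Rightarrow(iii)$, the key device I would use is the nearest-neighbour infimum $\delta(x):=\inf\{d(x,y)\colon y\in X,\ y\ne x\}$ attached to each point $x\in X$. Discreteness forces $\delta(x)>0$ for every $x$, because a singleton ball $B_r(x)$ yields $d(x,y)\ge r$ for all $y\ne x$. Moreover $\inf_{x\in X}\delta(x)=\inf D_0(X)$, so hypothesis $(ii)$ gives $\inf_{x}\delta(x)=0$. I would then pick points $z_m$ with $\delta(z_m)\to 0$ and pass to a subsequence to obtain $x_n$ along which the values $\delta(x_n)$ are strictly decreasing to $0$; strict monotonicity of these values automatically makes the $x_n$ pairwise distinct. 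Setting $r_n:=\delta(x_n)$ produces a strictly decreasing null sequence of positive radii. Here $|B_{r_n}(x_n)|=1$ holds because no point other than $x_n$ lies within distance $r_n=\delta(x_n)$ of $x_n$; and for every $k\in(1,\infty)$ the strict inequality $k\delta(x_n)>\delta(x_n)$ together with the definition of the infimum furnishes a point $y\ne x_n$ with $d(x_n,y)<kr_n$, so $B_{kr_n}(x_n)$ contains a second point. This delivers all the assertions of $(iii)$.

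For the converse $(iii)\Rightarrow(ii)$, I would simply read off small positive distances from the ball condition: taking $k=2$, for each $n$ the ball $B_{2r_n}(x_n)$ contains a point $y_n\ne x_n$, so $d(x_n,y_n)\in D_0(X)$ and $0<d(x_n,y_n)<2r_n\to 0$. Hence $\inf D_0(X)=0$, which is $(ii)$, and the cycle closes.

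The main obstacle is the construction in $(ii)\Rightarrow(iii)$, precisely because the infimum $\delta(x_n)$ need not be \emph{attained}: one cannot simply invoke a realizing nearest neighbour, and the second-point property must be verified uniformly for every $k\in(1,\infty)$. Phrasing the whole argument through $\delta(x)$ resolves both difficulties at once, since the infimum characterization simultaneously gives the singleton ball at radius $r_n$ and a witness inside each strictly larger ball; the remaining work is the routine bookkeeping of extracting a subsequence that is strictly decreasing (and therefore indexed by distinct points).
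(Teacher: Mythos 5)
Your proof is correct and follows essentially the same route as the paper: the equivalence $(i)\Leftrightarrow(ii)$ by directly unwinding \eqref{mytr} and \eqref{smwuu}, and the nearest-neighbour infimum $\delta(x)=\inf\{d(x,y)\colon y\in X\setminus\{x\}\}$ used as the radius $r_n$ is exactly the device the paper's (much terser) proof indicates for $(i)\Rightarrow(iii)$, while your $(iii)\Rightarrow(ii)$ supplies the ``simple arguments'' the paper omits. Note only that you silently read the condition $\left|B_{kr_n}(x_n)\right|\geq 1$ as $\left|B_{kr_n}(x_n)\right|\geq 2$; this is clearly the intended statement (it is how the paper itself uses it, both here and in Lemma~\ref{476}), since with ``$\geq 1$'' the ball trivially contains its center and the implication $(iii)\Rightarrow(i)$ would fail.
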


\begin{proof}
It is easy to see that the inequality
\[
    \inf D_0(X) > 0
\]
implies
\[
    d(x,y) \geq \inf D_0(X) > 0
\]
for all distinct $x,y \in X$.  
Thus~\eqref{mytr} holds with $t = \inf D_0(X)$.  
Conversely, if~\eqref{mytr} holds for all distinct $x,y\in X$ and $t>0$, then we have the inequality
\[
    \inf D_0(X) \geq t > 0.
\]
Therefore the logical equivalence $(i) \Leftrightarrow (ii)$ is valid.

Some simple arguments show that $(iii)$ implies $(i)$.  

To prove the validity of the implication $(i) \Rightarrow (iii)$ we only note that for each $x \in X$,
the inequality
\[
\inf \{ d(x,y) : y \in X \setminus \{x\} \} > 0
\]
holds, and
we have
\[
|B_{k r}(x)| \geq 2
\]
if 
$
r = \inf \{ d(x,y) : y \in X \setminus \{x\} \}
$
and $k \in (1,\infty)$.

\end{proof}

Recall that a sequence $(x_n)_{n \in \mathbb{N}}$ of points of a metric space $(X, d)$ is said to {\it converge} to a point $a \in X$ if
\begin{equation*}
    \lim_{n \to \infty} d(x_n, a) = 0.
\end{equation*}
 A point $x \in X$ is a \textit{limit point}  of a set $A \subseteq X$ if there is a sequence $(a_n)_{n \in \mathbb{N}}$ of different points of $A$ such that $(a_n)_{n \in \mathbb{N}}$ converges to the point $x$.
The set $A$ is said to be {\it dense} in $(X,d)$ if each $x\in X\setminus A$ is a limit point of $A$.

 A sequence $(x_n)_{n \in \mathbb{N}}$ of points of a metric space $(X,d)$ is called a {\it Cauchy sequence} iff
\begin{equation*}
    \lim\limits_{\substack{n \to \infty \\ m \to \infty}} d(x_n, x_m) = 0.
\end{equation*}

We will also use the following ``ultrametric'' form of the concept of Cauchy sequences (see, for example, \cite[p.~4]{PerezGarcia2010} or \cite[Theorem~1.6]{Comicheo2018}).

\begin{proposition}\label{prop:ultrametric-cauchy}
Let $(X,d)$ be an ultrametric space. A sequence $(x_n)_{n\in\mathbb{N}}$ of points of $X$ 
is a Cauchy sequence if and only if the limit relation
\begin{equation*}
\lim_{n\to\infty} d(x_n, x_{n+1}) = 0
\end{equation*}
holds.
\end{proposition}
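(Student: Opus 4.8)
The plan is to prove the two implications separately, the forward one being immediate and the reverse one resting on the characteristic ``chain'' estimate available in ultrametric spaces. First I would dispose of the easy direction: if $(x_n)_{n\in\mathbb{N}}$ is a Cauchy sequence, then by definition $\lim_{n\to\infty,\, m\to\infty} d(x_n, x_m) = 0$, and specializing to $m = n+1$ yields $\lim_{n\to\infty} d(x_n, x_{n+1}) = 0$ at once.

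For the reverse direction, I would assume $\lim_{n\to\infty} d(x_n, x_{n+1}) = 0$. The crucial observation is that in an ultrametric space the strong triangle inequality propagates along a finite chain: for all indices $n < m$ one has
\[
d(x_n, x_m) \leq \max\{\, d(x_k, x_{k+1}) : n \leq k \leq m-1 \,\}.
\]
I would establish this by induction on the difference $m - n$. The base case $m = n+1$ is trivial, and in the inductive step one writes $d(x_n, x_m) \leq \max\{d(x_n, x_{m-1}), d(x_{m-1}, x_m)\}$ using the strong triangle inequality and then applies the inductive hypothesis to the term $d(x_n, x_{m-1})$.

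With this estimate in hand, the conclusion follows directly. Given $\varepsilon > 0$, the hypothesis $d(x_k, x_{k+1}) \to 0$ furnishes an index $N$ such that $d(x_k, x_{k+1}) < \varepsilon$ for every $k \geq N$. Then for any $m > n \geq N$ the chain estimate gives $d(x_n, x_m) \leq \max\{ d(x_k, x_{k+1}) : n \leq k \leq m-1 \} < \varepsilon$, since all the indices $k$ occurring here satisfy $k \geq N$; symmetry covers the case $n > m$, and $n = m$ is trivial. Hence $\lim_{n\to\infty,\, m\to\infty} d(x_n, x_m) = 0$, so $(x_n)_{n\in\mathbb{N}}$ is a Cauchy sequence.

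I would not expect any genuine obstacle here; the only point requiring care is the inductive proof of the chain inequality, and this is precisely where the ultrametric structure is essential. In a general metric space the analogous bound would carry a sum of the $d(x_k,x_{k+1})$ in place of their maximum, and a tail whose consecutive gaps tend to zero need not be Cauchy, so the equivalence genuinely depends on the strong triangle inequality.
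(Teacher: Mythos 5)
Your proof is correct. The paper does not actually prove this proposition --- it only cites it from the literature (P\'erez-Garc\'ia--Schikhof and Comicheo--Shamseddine) --- and your argument is precisely the standard one those sources give: the forward implication by specializing $m=n+1$, and the converse via the chain estimate $d(x_n,x_m)\leq\max\{d(x_k,x_{k+1}):n\leq k\leq m-1\}$ proved by induction from the strong triangle inequality. Your closing remark correctly identifies why the statement fails for general metric spaces.
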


A metric space $(X,d)$ is \textit{complete} if every Cauchy sequence of points of $X$ converges to a point of $X$.

\begin{definition}\label{ssmmcc}
    Let $(X,d)$ be a metric space.  
A complete metric space $(Y,\rho)$ is called a \textit{completion} of $(X,d)$ if $(X,d)$ is isometric to a dense subspace of $(Y,\rho)$.
\end{definition}


The next proposition directly follows from Definition~2.3 and Theorem~10.12.5 of \cite{Sea2007}.
\begin{proposition}
    \label{fvhjjm}
 Let $(X,d)$ be a metric space, and let $(Y^1,\rho^1)$, $(Y^2,\rho^2)$ be
completions of $(X,d)$. Then $(Y^1,\rho^1)$ and $(Y^2,\rho^2)$  are isometric.
\end{proposition}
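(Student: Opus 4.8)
The plan is to build an explicit isometry $\Psi \colon Y^1 \to Y^2$ by extending, through continuity and completeness, the natural correspondence between the two dense copies of $X$. By Definition~\ref{ssmmcc} there are isometric embeddings $\iota_1 \colon X \to Y^1$ and $\iota_2 \colon X \to Y^2$ whose images $\iota_1(X)$ and $\iota_2(X)$ are dense in $Y^1$ and $Y^2$, respectively. The composition $f := \iota_2 \circ \iota_1^{-1}$ is then an isometry of the subspace $\iota_1(X)$ onto $\iota_2(X)$, since for all $x, x' \in X$ one has $\rho^2(f(\iota_1(x)), f(\iota_1(x'))) = \rho^2(\iota_2(x), \iota_2(x')) = d(x,x') = \rho^1(\iota_1(x), \iota_1(x'))$.

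First I would extend $f$ to all of $Y^1$. Given $y \in Y^1$, density of $\iota_1(X)$ provides a sequence $(\iota_1(x_n))_{n \in \mathbb{N}}$ converging to $y$; being convergent it is Cauchy in $Y^1$, so $(\iota_2(x_n))_{n\in\mathbb{N}} = (f(\iota_1(x_n)))_{n\in\mathbb{N}}$ is Cauchy in $Y^2$ because $f$ preserves distances. Completeness of $(Y^2, \rho^2)$ then yields a limit, which I would take as the value $\Psi(y)$. Note that for $y = \iota_1(x)$ the constant sequence gives $\Psi(\iota_1(x)) = \iota_2(x)$, so $\Psi$ extends $f$ and its image contains $\iota_2(X)$.

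Then I would verify well-definedness and the isometry property. Well-definedness means the limit does not depend on the approximating sequence: if $\iota_1(x_n) \to y$ and $\iota_1(x_n') \to y$, then $\rho^1(\iota_1(x_n), \iota_1(x_n')) \to 0$, hence $\rho^2(\iota_2(x_n), \iota_2(x_n')) = \rho^1(\iota_1(x_n), \iota_1(x_n')) \to 0$, forcing the two limits in $Y^2$ to coincide. The identity $\rho^2(\Psi(y), \Psi(y')) = \rho^1(y,y')$ then follows by choosing approximating sequences for $y$ and $y'$, applying the distance preservation of $f$ on the dense set, and passing to the limit using the continuity of the metrics.

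Finally I would establish surjectivity, which I expect to be the only point requiring a little care. The image $\Psi(Y^1)$ contains $\iota_2(X)$ and is therefore dense in $Y^2$; moreover, as the isometric image of the complete space $Y^1$, the set $\Psi(Y^1)$ is itself complete, and a complete subspace of a metric space is closed. A subset that is both dense and closed coincides with the whole space, so $\Psi(Y^1) = Y^2$. Thus $\Psi$ is a bijective isometry, and $(Y^1,\rho^1)$ and $(Y^2,\rho^2)$ are isometric.
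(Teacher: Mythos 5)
Your proof is correct and complete: the extension of $f=\iota_2\circ\iota_1^{-1}$ by density and completeness, the well-definedness check, the passage to the limit for the isometry identity, and the ``dense $+$ closed'' argument for surjectivity are all sound, and you correctly isolate surjectivity as the one step needing care. The paper, however, does not prove this proposition at all --- it simply remarks that the statement ``directly follows from Definition~2.3 and Theorem~10.12.5 of \cite{Sea2007}'' and defers to that textbook. So there is nothing in the paper to compare against; what you have written is the standard self-contained argument that the cited reference encapsulates, and it would serve as a valid replacement for the citation. One minor point worth a sentence in a fully polished write-up: the paper's notion of ``dense'' is phrased via limit points (sequences of \emph{distinct} points converging to each element of the complement), so for $y\in\iota_1(X)$ you rightly fall back on the constant sequence; your well-definedness argument already covers the consistency of these two cases.
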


The following definition gives us a generalization of the concept of isometry.

\begin{definition}
\label{scguite}
Let $(X, d)$ and $(Y, \delta)$ be metric spaces.
 A bijective mapping $\Phi\colon X \to Y$ is called a {\it weak similarity} if there is a strictly increasing bijection $f\colon D(Y) \to D(X)$ such that the equality
\begin{equation}\label{joi83}
d(x, y) = f \left( \delta \left( \Phi(x), \Phi(y) \right) \right)
\end{equation}
holds for all $x, y \in X$.
\end{definition}

\begin{remark}
The notion of weak similarity was introduced in \cite{DP2013AMH}. Papers \cite{BD2023BKMS,DovBBMSSS2020,DLAMH2020,Pet2018pNUAA} contain  some interconnections between weak similarities and other generalizations of the concept of isometry.
\end{remark}

\section{Preliminaries. Graphs}

\hspace{5 mm} A \textit{graph} is a pair $(V, E)$, where $V$ is a nonempty set and $E$ is a set of unordered pairs $\{u, v\}$ of distinct elements $u, v \in V$. For a graph $G = (V, E)$, the sets $V = V(G)$ and $E = E(G)$ are called the \textit{vertex set} and the \textit{edge set}, respectively. If $\{x, y\} \in E(G)$, then the vertices $x$ and $y$ are called \textit{adjacent}.  A graph is called \textit{finite} if $V(G)$ is finite.

\begin{definition}\label{776dcg}
Let $G_1$ and $G_2$ be graphs. A bijective mapping \linebreak
$\Phi \colon V(G_1)\to V(G_2)$
is called an {\it isomorphism} of $G_1$ and $G_2$ if the equalence 
\begin{equation*}
\{u,v\}\in E(G_1)\iff \{\Phi (u),\Phi (v)\}\in E(G_2)
\end{equation*}
holds for all $u,v\in V(G_1)$. 
The graphs are {\it isomorphic} if there is an isomorphism of these graphs.
\end{definition}

Let $G$ be a graph.
A graph \(G_1\) is a \emph{subgraph} of \(G\) if
\[
V(G_1) \subseteq V(G) \quad \text{and} \quad E(G_1) \subseteq E(G).
\]
In this case we will write \(G_1 \subseteq G\). If $V_1$ is  nonempty subset of $V(G)$, $G_1 \subseteq G$, $V_1 = V(G_1)$  
and $\{u,v\} \in E(G)$ implies
$\{u,v\} \in E(G_1)$ for all $u,v \in V_1$,  
then we say that $G_1$ is an \emph{induced subgraph} of $G$  
and that $G_1$ is induced by $V_1$.

A \emph{path} is a finite graph \(P\) whose vertices can be numbered without repetitions so that
\begin{equation}\label{e3.3-1}
V(P) = \{x_1, \ldots, x_k\} \quad \text{and} \quad E(P) = \{\{x_1, x_2\}, \ldots, \{x_{k-1}, x_k\}\}
\end{equation}
with \(k \geqslant 2\). We will write \(P = (x_1, \ldots, x_k)\) or \(P = P_{x_1, x_k}\) if \(P\) is a path satisfying \eqref{e3.3-1} and said that \(P\) is a \emph{path joining \(x_1\) and \(x_k\)}. A graph \(G\) is \emph{connected} if for every two distinct vertices of \(G\) there is a path \(P \subseteq G\) joining these vertices.

An infinite graph $G$ of the form
\[
V(G) = \{v_1, v_2, \ldots, v_n, v_{n+1}, \ldots \},
\]
\[
E(G) = \{\{v_1,v_2\}, \ldots, \{v_n,v_{n+1}\}, \ldots \},
\]
where $v_i \neq v_j$ for $i \neq j$,
is called a {\it ray}. We say that a graph is {\it rayless} if it contains no rays.

A finite graph $C$ is a \textit{cycle} if $|V(C)|\geq 3$ and there exists an enumeration of its vertices without repetition such that 
\begin{equation}\label{rgjo23}
    V(C) = \{x_1, \ldots, x_n\}, \quad
E(C) = \{\{x_1, x_2\}, \ldots, \{x_{n-1}, x_n\}, \{x_n, x_1\}\}.
\end{equation}
A cycle $C$ satisfying \eqref{rgjo23} is called {\it $n$-cycle}. The $n$-cycles will be denoted by $C_n$ in what follows.

\begin{definition}\label{rfgyppo}
    Let $G $ be a graph and let $k$ be a cardinal number. The graph $G$ is $k$-partite if the vertex set $V (G)$ can be partitioned into $k$ nonvoid disjoint subsets, or parts, in such a way that no edge has both ends in the same part. A $k$-partite graph is complete if any two vertices in different parts are adjacent.
\end{definition}

The complete $k$-partite finite graph with parts $V_1, \ldots, V_k$  will be denoted by
$K_{n_1, \ldots, n_k}$ if we have $n_1 \leq n_2 \leq \ldots \leq n_k$ 
and 
$|V_i| = n_i$ 
for every $i \in \{1,\ldots,n\}$.

The following proposition seems to be well known, but the authors cannot give an exact reference here.

\begin{proposition}\label{zbgqee}
Let $K_{n_1,\ldots,n_k}$ and $K_{m_1,\ldots,m_t}$ be complete
multipartite finite graphs. Then the following statements 
are equivalent:
\begin{itemize}[left=10pt]
    \item[(i)] The equality $k=t$ holds and, in addition, we have 
    $n_i = m_i$ for every $i \in \{1,\ldots,k\}$.
        \item[(ii)] $K_{n_1,\ldots,n_k}$ and $K_{m_1,\ldots,m_t}$ are isomorphic.
\end{itemize}
\end{proposition}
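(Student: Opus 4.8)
The plan is to prove both implications, observing that the forward direction $(i)\Rightarrow(ii)$ is a routine construction while the reverse direction $(ii)\Rightarrow(i)$ carries the real content. For $(i)\Rightarrow(ii)$, suppose $k=t$ and $n_i=m_i$ for all $i$. Let $V_1,\ldots,V_k$ and $W_1,\ldots,W_k$ be the parts of $K_{n_1,\ldots,n_k}$ and $K_{m_1,\ldots,m_k}$ respectively, and choose, for each $i$, any bijection $\Phi_i\colon V_i\to W_i$, which exists because $|V_i|=n_i=m_i=|W_i|$. These assemble into a single bijection $\Phi$ of the whole vertex set. To verify that $\Phi$ is an isomorphism in the sense of Definition~\ref{776dcg}, I would invoke the defining property of complete multipartite graphs from Definition~\ref{rfgyppo}: two distinct vertices are adjacent if and only if they lie in different parts. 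Since $\Phi$ maps $V_i$ onto $W_i$, it sends a pair contained in one part to a pair contained in one part, and a pair lying in two different parts to a pair lying in two different parts; hence adjacency is preserved in both directions.

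For $(ii)\Rightarrow(i)$, the key idea is that the partition into parts is an \emph{intrinsic} feature of a complete multipartite graph, recoverable from the edge set alone. On the vertex set of such a graph $G$ I would introduce the relation $u\sim v$, declared to hold when $u=v$ or $\{u,v\}\notin E(G)$. I claim that $\sim$ is an equivalence relation whose classes are exactly the parts. Reflexivity and symmetry are immediate; the crux is transitivity, and this is precisely where completeness enters. If $u,v,w$ are distinct with $u\sim v$ and $v\sim w$, then $u,v$ are nonadjacent and $v,w$ are nonadjacent, so by Definition~\ref{rfgyppo} each pair must lie in a common part (distinct vertices in different parts are adjacent); thus $u,v,w$ all share one part, whence $u$ and $w$ are nonadjacent and $u\sim w$. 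Consequently the $\sim$-classes coincide with the parts $V_1,\ldots,V_k$.

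With this characterization in hand the conclusion follows quickly. An isomorphism $\Phi\colon V(G_1)\to V(G_2)$ preserves both adjacency and nonadjacency by Definition~\ref{776dcg}, hence preserves the relation $\sim$, and therefore carries the $\sim$-classes of $G_1$ bijectively onto those of $G_2$. Since these classes are exactly the parts, $\Phi$ induces a size-preserving bijection between the family of parts of $K_{n_1,\ldots,n_k}$ and the family of parts of $K_{m_1,\ldots,m_t}$. In particular the two graphs have the same number of parts, so $k=t$, and their multisets of part sizes coincide. Because the sizes are listed in nondecreasing order in the notation $K_{n_1,\ldots,n_k}$, equality of the multisets forces $n_i=m_i$ for every $i\in\{1,\ldots,k\}$, which is statement $(i)$.

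I expect the transitivity argument in the second paragraph to be the only genuine obstacle, since it is the single point at which the hypothesis \textbf{complete}, rather than merely \emph{multipartite}, is indispensable: nonadjacency is not transitive in an arbitrary graph, and it is exactly the completeness of the multipartite structure that repairs this and makes the parts canonically reconstructible.
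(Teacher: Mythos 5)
Your proof is correct, and both directions are sound: the forward direction is the same routine verification the paper gives, and your reverse direction hinges on the same underlying fact as the paper's, namely that the partition into parts is intrinsic to a complete multipartite graph. The difference is in how that fact is established. The paper passes to complements: it observes that the complement of a complete multipartite graph is a disjoint union of complete graphs (whose components are exactly the parts), notes that an isomorphism of graphs induces an isomorphism of complements, and then uses the fact that complete graphs are classified by their vertex count. You instead work directly in the graph itself, defining $u\sim v$ to mean $u=v$ or $\{u,v\}\notin E(G)$ and proving transitivity from completeness; your $\sim$-classes are precisely the connected components of the paper's complement graph, so the two arguments are two phrasings of one idea. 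What your version buys is self-containedness: the paper's step ``one can easily prove the validity of statement $(i)$'' silently relies on exactly the transitivity you spell out (non-adjacency is not transitive in general graphs), and you correctly identify this as the one place where completeness, not mere multipartiteness, is essential. One presentational point: you should note explicitly, as you implicitly do, that an isomorphism in the sense of Definition~\ref{776dcg} preserves non-adjacency as well as adjacency (the definition is an equivalence, so this is immediate), since that is what makes $\sim$ isomorphism-invariant.
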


\begin{proof}
It follows directly from Definition~\ref{rfgyppo} that
$K_{n_1,\ldots,n_k}$ and $K_{m_1,\ldots,m_t}$ are isomorphic if statement $(i)$ holds. 

Let $(ii)$ hold. Let $\overline{K}_{n_1,\ldots,n_k}$ and $\overline{K}_{m_1,\ldots,m_t}$ be the complements of $K_{n_1,\ldots,n_k}$ and $K_{m_1,\ldots,m_t}$ respectively. Statement $(ii)$ and Definition~\ref{rfgyppo} imply that $\overline{K}_{n_1,\ldots,n_k}$ and $\overline{K}_{m_1,\ldots,m_t}$ are isomorphic graphs.

Since $K_{n_1,\ldots,n_k}$ and $K_{m_1,\ldots,m_t}$ are complete multipartite graphs, the complements $\overline{K}_{n_1,\ldots,n_k}$ and $\overline{K}_{m_1,\ldots,m_t}$ are disjoint unions
of complete graphs. 
Two complete graphs $G_1$ and $G_2$ are isomorphic if and only if
they have the same number of vertices, $|V(G_1)| = |V(G_2)|$.
Applying this statement to subgraphs of the graphs $\overline{K}_{n_1,\ldots,n_k}$ and
$\overline{K}_{m_1,\ldots,m_t}$ induced by parts of $K_{n_1,\ldots,n_k}$ and, respectively, by parts of $K_{m_1,\ldots,m_t}$, one can easily prove the validity
of statement $(i)$. 
\end{proof}

The following simple corollary of Proposition~\ref{zbgqee} describes the complete multipartite graph which are isomorphic to the cycle $C_4$.

\begin{corollary}\label{renjh}
Let $K=K_{n_1,\ldots,n_k}$ be a complete multipartite finite graph. 
Then the following statements are equivalent:
\begin{itemize}[left=10pt]
    \item[(i)] The equality $k=2$ holds and, in addition, we have  $n_1=n_2=2$.

    \item[(ii)] $K_{n_1,\ldots,n_k}$ is isomorphic to the cycle $C_4$.
\end{itemize} 
\end{corollary}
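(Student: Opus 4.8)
The plan is to reduce the statement entirely to Proposition~\ref{zbgqee} by first recognizing the cycle $C_4$ itself as a complete multipartite graph. The key observation is that $C_4$ is isomorphic to the complete bipartite graph $K_{2,2}$. Once this identification is in place, both implications of the corollary become immediate consequences of the rigidity established in Proposition~\ref{zbgqee}, namely that complete multipartite finite graphs are isomorphic precisely when their (sorted) part-size parameters coincide.

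First I would verify that $C_4 \cong K_{2,2}$ directly from Definition~\ref{rfgyppo}. Write the vertices of $C_4$ as $x_1, x_2, x_3, x_4$ with edge set $\{\{x_1,x_2\},\{x_2,x_3\},\{x_3,x_4\},\{x_4,x_1\}\}$, and take the partition $V_1 = \{x_1, x_3\}$ and $V_2 = \{x_2, x_4\}$ into the two pairs of non-adjacent (``opposite'') vertices. Then neither $\{x_1,x_3\}$ nor $\{x_2,x_4\}$ is an edge, so no edge has both ends in one part; and all four cross pairs $\{x_1,x_2\}$, $\{x_2,x_3\}$, $\{x_3,x_4\}$, $\{x_4,x_1\}$ are edges, so any two vertices in different parts are adjacent. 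Hence $C_4$ is complete bipartite with $|V_1| = |V_2| = 2$, that is, $C_4 \cong K_{2,2}$.

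With this identification the equivalence follows at once. If statement $(i)$ holds, then $K = K_{2,2}$ by definition, and the isomorphism $K_{2,2} \cong C_4$ just exhibited gives statement $(ii)$. Conversely, if $(ii)$ holds, then $K_{n_1,\ldots,n_k}$ is isomorphic to the complete multipartite graph $K_{2,2}$, so Proposition~\ref{zbgqee} (applied with $K_{m_1,\ldots,m_t} = K_{2,2}$) forces $k = 2$ together with $n_i = m_i$ for $i \in \{1,2\}$; since $m_1 = m_2 = 2$ this yields $n_1 = n_2 = 2$, which is statement $(i)$.

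There is no genuine obstacle here, as the argument is essentially a dictionary lookup into Proposition~\ref{zbgqee}. The only point requiring a little care is the identification $C_4 \cong K_{2,2}$: one must choose the bipartition to consist of the two pairs of non-adjacent vertices rather than an arbitrary $2{+}2$ split, since a split containing an adjacent pair would violate the ``no edge inside a part'' requirement of Definition~\ref{rfgyppo}.
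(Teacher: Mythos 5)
Your proposal is correct and follows essentially the same route as the paper: it identifies $C_4$ as the complete bipartite graph with parts $\{x_1,x_3\}$ and $\{x_2,x_4\}$ to get $(i)\Rightarrow(ii)$, and invokes Proposition~\ref{zbgqee} for $(ii)\Rightarrow(i)$. No gaps.
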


\begin{proof}
 The cycle $C_4$ admits an enumeration of its vertices such that 
\begin{equation}
    \label{era1}
V(C_4)=\{x_1,x_2,x_3,x_4\}, 
\end{equation}
and
\begin{equation}
    \label{era2}
E(C_4)=\{\{x_1,x_2\},\{x_2,x_3\},\{x_3,x_4\},\{x_4,x_1\}\}.
\end{equation}
Using \eqref{era1} and \eqref{era2} it is easy to prove that $C_4$ is a complete bipartite graph with parts $\{x_1,x_3\}$ and $\{x_2,x_4\}$. Hence the implication $(i)\Rightarrow (ii)$ is valid.  

The validity of $(ii)\Rightarrow(i)$ follows from Proposition~\ref{zbgqee}. 

\end{proof}

As was shown in \cite{DDP2011pNUAA}
the concept of complete multipartite graphs and the concept of
ultrametric spaces are closely related. In order
to describe this relationship, we recall the definition of diametrical graphs.

The following is a modification of Definition 2.1 from \cite{PD2014JMS}.

\begin{definition}\label{d5.2}
Let $(X,d)$ be a metric space. Denote by \(G_{X,d}\) a graph such that \(V(G_{X,d}) = X\) and, for \(u\), \(v \in V(G_{X,d})\),
\begin{equation}\label{d5.2:e1}
(\{u,v\}\in E(G_{X,d}))\Leftrightarrow (d(u,v)=\operatorname{diam} X \text{ and } u \neq v).
\end{equation}
We call $G_{X,d}$ the \emph{diametrical graph} of \((X, d)\).
\end{definition}

\begin{example}\label{ex2.24}
The diametrical graphs of ultrametric spaces depicted in Figure~\ref{cis} are the $C_4$ cycles depicted in Figure~\ref{cis2}.

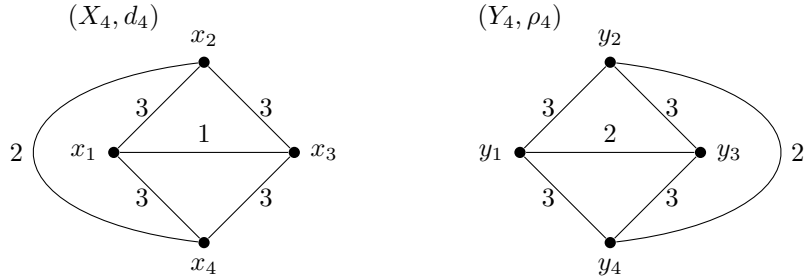
\begin{figure}[H]
\centering
\begin{tikzpicture}[remember picture, scale=1.2, every node/.style={font=\small}]

  \begin{scope}[xshift=-3.5cm]
    \node at (-1.0,1.5) {$(X_4,d_4)$};
    \node[circle,fill,inner sep=1.5pt,label=above:$x_2$] (x2) at (0,1) {};
    \node[circle,fill,inner sep=1.5pt,label=left:$x_1$]  (x1) at (-1,0) {};
    \node[circle,fill,inner sep=1.5pt,label=right:$x_3$] (x3) at (1,0) {};
    \node[circle,fill,inner sep=1.5pt,label=below:$x_4$] (x4) at (0,-1) {};
    \draw (x1) -- node[left] {$3$} (x2);
    \draw (x2) -- node[right,yshift=-0pt] {$3$} (x3);
    \draw (x3) -- node[right] {$3$} (x4);
    \draw (x4) -- node[left] {$3$} (x1);
    \draw (x1) -- node[above] {$1$} (x3);
    \draw (x4) .. controls (-2.5,-0.7) and (-2.5,0.7) .. node[left] {$2$} (x2);
  \end{scope}

  \begin{scope}[xshift=1.0cm]
    \node at (-1.0,1.5) {$(Y_4,\rho_4)$};
    \node[circle,fill,inner sep=1.5pt,label=above:$y_2$] (y2) at (0,1) {};
    \node[circle,fill,inner sep=1.5pt,label=left:$y_1$]  (y1) at (-1,0) {};
    \node[circle,fill,inner sep=1.5pt,label=right:$y_3$] (y3) at (1,0) {};
    \node[circle,fill,inner sep=1.5pt,label=below:$y_4$] (y4) at (0,-1) {};
    \draw (y1) -- node[left] {$3$} (y2);
    \draw (y2) -- node[right,yshift=-0pt] {$3$} (y3);
    \draw (y3) -- node[right] {$3$} (y4);
    \draw (y4) -- node[left] {$3$} (y1);
    \draw (y1) -- node[above] {$2$} (y3);
    \draw (y2) .. controls (2.5,0.7) and (2.5,-0.7) .. node[right] {$2$} (y4);
  \end{scope}

\end{tikzpicture}
\caption{The four-point ultrametric spaces $(X_4,d_4)$ and $(Y_4,\rho_4)$.}
\label{cis}
\end{figure}

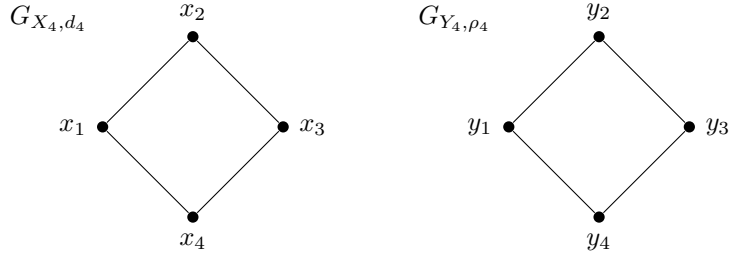
\begin{figure}[H]
\centering
\begin{tikzpicture}[remember picture, scale=1.2, every node/.style={font=\small}]
  \begin{scope}[xshift=-3.5cm]
    \node at (-1.6,1.2) {$G_{X_4,d_4}$};
    \node[circle,fill,inner sep=1.5pt,label=above:$x_2$] (x2) at (0,1) {};
    \node[circle,fill,inner sep=1.5pt,label=left:$x_1$]  (x1) at (-1,0) {};
    \node[circle,fill,inner sep=1.5pt,label=right:$x_3$] (x3) at (1,0) {};
    \node[circle,fill,inner sep=1.5pt,label=below:$x_4$] (x4) at (0,-1) {};
    \draw (x1) -- (x2) -- (x3) -- (x4) -- (x1);
  \end{scope}

  \begin{scope}[xshift=1.0cm]
    \node at (-1.6,1.2) {$G_{Y_4,\rho_4}$};
    \node[circle,fill,inner sep=1.5pt,label=above:$y_2$] (y2) at (0,1) {};
    \node[circle,fill,inner sep=1.5pt,label=left:$y_1$]  (y1) at (-1,0) {};
    \node[circle,fill,inner sep=1.5pt,label=right:$y_3$] (y3) at (1,0) {};
    \node[circle,fill,inner sep=1.5pt,label=below:$y_4$] (y4) at (0,-1) {};
    \draw (y1) -- (y2) -- (y3) -- (y4) -- (y1);
  \end{scope}
\end{tikzpicture}
\caption{The diametrical graphs of $(X_4,d_4)$ and $(Y_4,\rho_4)$.}
\label{cis2}
\end{figure}

\end{example}

The following theorem was proved in \cite{DDP2011pNUAA}.

\begin{theorem}\label{thm:3.1}
Let $(X,\rho)$ be an ultrametric space with $2\leq |X |<\infty$
and let $G_{X,\rho}$ be the diametrical graph of $(X,\rho)$.
Then $G_{X,\rho}$ is a complete $k$-partite finite graph with $k\geq 2$. Conversely, if $G$ is a complete 
$k$-partite finite graph with $k\geq 2$, then there is an ultrametric 
$d \colon V(G) \times V(G) \to \mathbb{R}^+$ such that $G=G_{V,d}$.
\end{theorem}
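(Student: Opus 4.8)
The plan is to treat the two implications separately. For the forward direction I would introduce on $X$ the relation defined by declaring $u$ and $v$ related precisely when $u=v$ or $\rho(u,v) < \operatorname{diam} X$, prove it is an equivalence relation, and then show that its equivalence classes are exactly the parts witnessing that $G_{X,\rho}$ is complete multipartite.

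First I would observe that reflexivity and symmetry are immediate from the definition and from the symmetry of $\rho$. The crucial step is transitivity: if $u,v,w$ are pairwise distinct with $\rho(u,v) < \operatorname{diam} X$ and $\rho(v,w) < \operatorname{diam} X$, then the strong triangle inequality gives $\rho(u,w) \leq \max\{\rho(u,v),\rho(v,w)\} < \operatorname{diam} X$, so $u$ and $w$ are related as well (the cases where two of the three points coincide being trivial). This is the single place where ultrametricity is genuinely used. Taking the equivalence classes as the parts $V_1,\ldots,V_k$, two vertices in the same class are at distance strictly below $\operatorname{diam} X$ (or equal), hence non-adjacent in $G_{X,\rho}$, while two vertices in distinct classes are unrelated, so their distance is not strictly below $\operatorname{diam} X$; since $\operatorname{diam} X = \sup D(X)$ always bounds $\rho(u,v)$ from above, such a distance must equal $\operatorname{diam} X$, making the pair adjacent. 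This is exactly the complete multipartite condition of Definition~\ref{rfgyppo}. Finally, because $|X|\geq 2$ and $X$ is finite, $\operatorname{diam} X > 0$ and is attained by some pair of distinct points, so at least two classes exist and $k\geq 2$.

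For the converse, given a complete $k$-partite graph $G$ with parts $V_1,\ldots,V_k$ and $k\geq 2$, I would define $d$ on $V=V(G)$ explicitly by $d(u,u)=0$, by $d(u,v)=1$ when $u,v$ lie in different parts, and by $d(u,v)=\tfrac12$ when $u\neq v$ but $u,v$ lie in the same part. A short case analysis over the three possible distance values $\{0,\tfrac12,1\}$ verifies the strong triangle inequality, so $d$ is an ultrametric. Since $k\geq 2$, the value $1$ is attained, whence $\operatorname{diam} V = 1$; by \eqref{d5.2:e1} the edges of $G_{V,d}$ are exactly the pairs at distance $1$, that is, the pairs in different parts, which are precisely the edges of $G$. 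Thus $G=G_{V,d}$.

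I expect the main obstacle to be conceptual rather than computational: recognizing in the forward direction that the strong triangle inequality forces ``being strictly closer than the diameter'' to be a transitive relation, which is exactly what converts the metric data into a clean partition of the vertex set. Once this observation is made, the identification of the classes with the multipartite parts and the explicit three-valued ultrametric in the converse are both routine.
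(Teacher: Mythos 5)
Your proof is correct. Note that the paper itself gives no proof of Theorem~\ref{thm:3.1}: it is quoted as a known result from \cite{DDP2011pNUAA}, so there is nothing in the text to compare against. Your argument is the standard self-contained one: the key observation that ultrametricity makes the relation ``$u=v$ or $\rho(u,v)<\operatorname{diam} X$'' transitive correctly yields the parts (with $k\geq 2$ because the diameter is positive and attained on a finite space), and the three-valued ultrametric $\{0,\tfrac12,1\}$ in the converse direction does verify the strong triangle inequality and reproduces $G$ as the diametrical graph.
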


\begin{remark}
\label{rem:3.3}
Papers~\cite{BDK2022TAG,SDL2022} describe some other interconnections between
complete multipartite graphs and ultrametric spaces.
\end{remark}

\begin{proposition}
\label{prop:3.4}
Let metric spaces $(X,d)$ and $(Y,\rho)$ be finite and weakly similar, 
with $|X| = |Y| \geq 2$.  
Then the diametrical graphs $G_{X,d}$ and $G_{Y,\rho}$ are isomorphic.
\end{proposition}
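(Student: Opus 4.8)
The plan is to show that the weak similarity $\Phi$ is itself an isomorphism of the diametrical graphs. By Definition~\ref{scguite}, $\Phi\colon X\to Y$ is a bijection, and by Definition~\ref{d5.2} we have $V(G_{X,d})=X$ and $V(G_{Y,\rho})=Y$; hence $\Phi$ is already a bijection between the two vertex sets. Consequently, by Definition~\ref{776dcg}, it suffices to verify the edge equivalence
\[
\{u,v\}\in E(G_{X,d}) \iff \{\Phi(u),\Phi(v)\}\in E(G_{Y,\rho})
\]
for all $u,v\in X$.

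The key point is that the strictly increasing bijection $f\colon D(Y)\to D(X)$ from Definition~\ref{scguite} sends $\operatorname{diam} Y$ to $\operatorname{diam} X$. Indeed, because $X$ and $Y$ are finite, the distance sets $D(X)$ and $D(Y)$ are finite subsets of $\mathbb{R}^+$, so the suprema are attained and $\operatorname{diam} X=\max D(X)$, $\operatorname{diam} Y=\max D(Y)$. A strictly increasing bijection between finite totally ordered sets carries the greatest element to the greatest element; therefore $f(\operatorname{diam} Y)=\operatorname{diam} X$.

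With this in hand I would argue as follows. By the defining equality \eqref{joi83}, for all $u,v\in X$ we have $d(u,v)=f(\rho(\Phi(u),\Phi(v)))$. Hence $d(u,v)=\operatorname{diam} X$ holds if and only if $f(\rho(\Phi(u),\Phi(v)))=\operatorname{diam} X=f(\operatorname{diam} Y)$, and since $f$ is injective this is equivalent to $\rho(\Phi(u),\Phi(v))=\operatorname{diam} Y$. Moreover, $\Phi$ being a bijection yields $u\neq v\iff \Phi(u)\neq\Phi(v)$. Combining these two equivalences with the characterization \eqref{d5.2:e1}, the condition $\{u,v\}\in E(G_{X,d})$, that is, $d(u,v)=\operatorname{diam} X$ together with $u\neq v$, is equivalent to $\{\Phi(u),\Phi(v)\}\in E(G_{Y,\rho})$. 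This establishes the edge equivalence, so $\Phi$ is an isomorphism of $G_{X,d}$ and $G_{Y,\rho}$.

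The argument is essentially routine once the correct map is chosen; the only step requiring any care is the observation that $f$ preserves the diameter, and this rests entirely on the finiteness hypothesis (which guarantees $\operatorname{diam} X=\max D(X)$ and $\operatorname{diam} Y=\max D(Y)$) together with the strict monotonicity of $f$. I therefore expect no serious obstacle: finiteness is precisely what makes the diameters the maxima of the respective distance sets, and strict monotonicity of a bijection forces the maxima to correspond.
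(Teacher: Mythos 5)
Your proposal is correct and follows essentially the same route as the paper: take the weak similarity $\Phi$ itself as the graph isomorphism and observe that $d(u,v)=\operatorname{diam}X$ iff $\rho(\Phi(u),\Phi(v))=\operatorname{diam}Y$. You merely spell out the step the paper leaves implicit, namely that the strictly increasing bijection $f$ carries $\max D(Y)$ to $\max D(X)$, which is exactly where the finiteness hypothesis is used.
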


\begin{proof}
Let $\Phi : X \to Y$ be a weak similarity of $(X,d)$ and $(Y,\rho)$, and let $x,y \in X$. Then, by Definition~\ref{scguite},
  the equality
\begin{equation*}
d(x,y) = \operatorname{diam} X
\end{equation*}
holds if and only if
\begin{equation*}
\rho\bigl(\Phi(x), \Phi(y)\bigr) = \operatorname{diam} Y.
\end{equation*}
Hence, the mapping $\Phi : X \to Y$ is an isomorphism of 
$G_{X,d}$ and $G_{Y,\rho}$ by Definition~\ref{776dcg}.
\end{proof}

\section{Preliminaries. Star graphs and {\bf US}-spaces}

\hspace{5 mm} A connected graph without cycles is called a \textit{tree}.

A \textit{labeled tree} $T(l)$ is a pair $(T, l)$, where $T$ is a tree and $l$ is a function
$
l \colon V(T) \to \mathbb{R}^+.
$

Let $T = T(l)$ be a labeled tree. Following \cite{Dov2020TaAoG}, we consider the mapping $d_l \colon V(T) \times V(T) \to \mathbb{R}^+$,
\begin{equation}
\label{e1.1}
d_l(u, v) :=
\begin{cases}
0, & \text{if } u = v, \\
\max\limits_{w \in V(P)} l(w), & \text{otherwise},
\end{cases}
\end{equation}
where $P$ denotes the unique path connecting $u$ and $v$ in $T$.

\begin{theorem}[\cite{Dov2020TaAoG}, Proposition 3.2]
\label{t1.4}
Let $T = T(l)$ be a labeled tree and $d_l $ be deined by \eqref{e1.1}. Then $d_l$ is an ultrametric on $V(T)$ if and only if
\begin{equation}\label{gjj1ks}
\max\{l(u), l(v)\} > 0
\end{equation}
holds for every edge $\{u, v\} \in E(T)$.
\end{theorem}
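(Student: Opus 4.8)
The plan is to verify directly the three defining properties of an ultrametric for $d_l$, isolating the single property that depends on condition \eqref{gjj1ks}. First I would observe that two of the axioms hold for \emph{every} labeling $l$. Symmetry is immediate: the unique path $P$ joining $u$ and $v$ coincides, as a subgraph, with the unique path joining $v$ and $u$, so $V(P)$ and hence $\max_{w\in V(P)} l(w)$ are unchanged under swapping $u$ and $v$. Nonnegativity is clear because $l$ takes values in $\mathbb{R}^+$.

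The heart of the argument is the strong triangle inequality, and here I would use a purely tree-theoretic containment. For pairwise distinct $x,y,z$, let $P_{xy}$, $P_{xz}$, $P_{zy}$ denote the corresponding unique paths. Since $P_{xz}\cup P_{zy}$ is a connected subgraph containing both $x$ and $y$, and since in a tree the unique path joining $x$ and $y$ lies inside every connected subgraph joining these vertices, one obtains $V(P_{xy})\subseteq V(P_{xz})\cup V(P_{zy})$. Taking $l$-maxima over this inclusion, and using that the maximum over a union is the larger of the two maxima, yields
\[
d_l(x,y)=\max_{w\in V(P_{xy})} l(w) \le \max\Big\{\max_{w\in V(P_{xz})} l(w),\ \max_{w\in V(P_{zy})} l(w)\Big\}=\max\{d_l(x,z),d_l(z,y)\}.
\]
The degenerate cases in which two of $x,y,z$ coincide reduce the inequality to a triviality, so \eqref{e1.1} satisfies the strong triangle inequality regardless of $l$. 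This step is exactly what forces the \emph{strong} (rather than merely ordinary) triangle inequality: it is the max-over-a-subset structure, not the labels themselves, that does the work.

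It then remains to analyze the identity-of-indiscernibles axiom, which is the only place where \eqref{gjj1ks} enters. The implication $u=v\Rightarrow d_l(u,v)=0$ holds by definition; the content is the converse for $u\neq v$. If \eqref{gjj1ks} holds, then for distinct $u,v$ the path $P$ contains at least one edge $\{a,b\}$, whence $d_l(u,v)\ge \max\{l(a),l(b)\}>0$; combined with the two axioms already checked, $d_l$ is an ultrametric. Conversely, if \eqref{gjj1ks} fails there is an edge $\{a,b\}$ with $l(a)=l(b)=0$; since the unique path joining the adjacent vertices $a$ and $b$ is the single edge $\{a,b\}$, we get $d_l(a,b)=\max\{l(a),l(b)\}=0$ while $a\neq b$, so $d_l$ is not even a metric. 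This establishes both directions of the equivalence.

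I expect the only subtlety to be a clean justification of the containment $V(P_{xy})\subseteq V(P_{xz})\cup V(P_{zy})$, together with the careful but routine bookkeeping of the coincidence cases; everything else is immediate from the definition \eqref{e1.1}.
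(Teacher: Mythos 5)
Your proof is correct. Note that the paper itself gives no proof of this statement --- it is imported verbatim as Proposition~3.2 of the cited reference --- so there is no internal argument to compare against; your direct verification is the standard one. The key step, the containment $V(P_{xy})\subseteq V(P_{xz})\cup V(P_{zy})$, is justified exactly as you suggest: the union $P_{xz}\cup P_{zy}$ is a connected subgraph of the tree containing $x$ and $y$, hence a subtree, and the path joining $x$ and $y$ inside it is a path in $T$ and therefore equals $P_{xy}$ by uniqueness; the remaining axioms and the role of condition \eqref{gjj1ks} are handled correctly, including the observation that $V(P)$ contains both endpoints of every edge of $P$, which is what makes the forward implication work.
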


A labeling $l \colon V(T) \to \mathbb R^+$ is said to be \textit{non-degenerate} if inequality \eqref{gjj1ks} is satisfied for every edge $\{u,v\}$ of $T$.

The following theorem was proved in \cite{DK2022LTGCCaDUS}.

\begin{theorem}\label{rebv}
    Let $T$ be a tree. Then the following statements are equivalent:

\begin{itemize}[left=10pt]
    \item [(i)] The ultrametric space $(V(T), d_l)$ is complete for every non-degenerate labeling $l \colon V(T) \to \mathbb{R}^+$.

\item[(ii)] $T$ is rayless.
\end{itemize}

\end{theorem}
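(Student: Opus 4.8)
The plan is to prove the equivalence by establishing two implications, tackling the contrapositive of each direction.

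\textbf{Direction} $(ii)\Rightarrow(i)$. Suppose $T$ is rayless. Let $l\colon V(T)\to\mathbb{R}^+$ be any non-degenerate labeling and let $(x_n)_{n\in\mathbb N}$ be a Cauchy sequence in $(V(T),d_l)$. By Proposition~\ref{prop:ultrametric-cauchy}, the Cauchy condition is equivalent to $\lim_{n\to\infty} d_l(x_n,x_{n+1})=0$. First I would handle the trivial case where the sequence is eventually constant (then it converges automatically). In the remaining case I would pass to a subsequence of pairwise distinct points, so that consecutive points $x_n,x_{n+1}$ are joined by genuine paths $P_n$ in $T$ whose maximal label $\max_{w\in V(P_n)}l(w)=d_l(x_n,x_{n+1})$ tends to $0$. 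The key idea is to use these paths to construct a ray, contradicting raylessness, unless the sequence stabilizes at some vertex. Concretely, I would argue that the union of the paths $P_n$ forms a connected subgraph in which the $x_n$ recede toward infinity; because $T$ has no cycles and the distances strictly decrease along a suitable subsequence, one can extract an infinite simple path, i.e.\ a ray. Since $T$ is rayless this is impossible, forcing the sequence to converge to some vertex of $T$. Hence $(V(T),d_l)$ is complete.

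\textbf{Direction} $(i)\Rightarrow(ii)$, via contrapositive. Suppose $T$ contains a ray $R$ with vertices $v_1,v_2,\ldots$ and edges $\{v_i,v_{i+1}\}$. The goal is to construct a single non-degenerate labeling $l$ for which $(V(T),d_l)$ is \emph{not} complete. The natural construction is to put decreasing positive labels along the ray, for instance $l(v_i):=2^{-i}$, and to assign labels to the remaining vertices of $T$ so that non-degeneracy \eqref{gjj1ks} holds on every edge (e.g.\ a fixed positive value, or any choice making each edge satisfy $\max\{l(u),l(v)\}>0$; one must double-check the edges incident to the ray do not force a label exceeding the intended small values, which can be arranged by choosing the off-ray labels appropriately). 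With this labeling, $d_l(v_i,v_{i+1})=\max\{l(v_i),l(v_{i+1})\}=2^{-i}\to 0$, so $(v_i)_{i\in\mathbb N}$ is Cauchy by Proposition~\ref{prop:ultrametric-cauchy}. I would then show this sequence has no limit: if $v_i\to a$ for some $a\in V(T)$, then $d_l(v_i,a)\to 0$, but for any fixed vertex $a$ the path from $a$ to $v_i$ eventually traverses an initial, fixed portion of the ray whose labels are bounded below by a positive constant, so $d_l(v_i,a)$ cannot tend to $0$. This contradiction shows the Cauchy sequence does not converge, so $(V(T),d_l)$ is not complete.

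\textbf{Main obstacle.} The delicate part is the construction of the ray in $(ii)\Rightarrow(i)$: extracting an honest infinite simple path from the union of the connecting paths $P_n$. One must ensure the $x_n$ do not merely oscillate within a bounded region of the tree while their mutual distances shrink. The tree structure is essential here—absence of cycles means any two paths intersect in a connected (hence path) segment, which lets me splice the $P_n$ together cleanly. I would argue that if the sequence does not stabilize, the vertices on the high-label ``ancestral'' portions of the $P_n$ must march outward without returning, producing a ray; the decreasing-label condition guarantees that the maximal-label vertex of each $P_n$ lies progressively farther out, which is exactly what prevents the configuration from staying finite. Making this extraction rigorous—likely by tracking, for each $n$, the vertex of maximal label on $P_n$ and showing these generate an unbounded branch—is where the real work lies, whereas the non-completeness direction is comparatively routine.
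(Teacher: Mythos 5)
First, a point of comparison: the paper does not prove Theorem~\ref{rebv} at all --- it is quoted from \cite{DK2022LTGCCaDUS} --- so there is no in-paper proof to measure you against, and your proposal must stand on its own. Your implication $(i)\Rightarrow(ii)$ does stand: with $l(v_i)=2^{-i}$ on the ray and a positive constant elsewhere, every label is positive, so non-degeneracy is automatic (your worry about edges incident to the ray is vacuous); $d_l(v_i,v_{i+1})=2^{-i}$ makes $(v_i)$ Cauchy by Proposition~\ref{prop:ultrametric-cauchy}; and for any fixed $a$ the path from $a$ to $v_i$ (for large $i$) always contains the fixed vertex $v_j$ at which that path first meets the ray, so $d_l(a,v_i)\geq 2^{-j}>0$ and the sequence has no limit. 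That half is complete.

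The gap is in $(ii)\Rightarrow(i)$, exactly where you flagged it, and your proposed repair does not close it. From the infinite connected union $\bigcup_n P_n$ you cannot simply ``extract an infinite simple path'': an infinite subtree need not contain a ray (an infinite star does not), and neither ``the distances strictly decrease'' nor ``tracking the vertex of maximal label on $P_n$'' forces the $x_n$ to recede. In the star with center $c$ labeled $0$ and leaves labeled $1/n$, the maximal-label vertices of the connecting paths are pairwise non-adjacent leaves and generate no branch whatsoever; there the Cauchy sequence simply converges to $c$. So the true dichotomy is ``converges or yields a ray,'' and you must use non-convergence as a hypothesis, not just distinctness of the $x_n$. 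One correct route: pass to a subsequence with $d_l(x_n,x_m)<2^{-n}$ for all $m>n$ and set $U_N:=\bigcup_{n\geq N}P_n$. These are nested infinite subtrees and every vertex of $U_N$ has label $<2^{-N}$. If some $c$ lies in $\bigcap_N U_N$, then $c$ lies on some $P_m$ with $m\geq N$, the subpath from $c$ to $x_m$ has all labels $<2^{-N}$, and the strong triangle inequality gives $d_l(c,x_n)<2^{-N}$ for all $n\geq N$, i.e.\ $x_n\to c$. If instead $\bigcap_N U_N=\varnothing$, fix $v\in U_1$, let $f_N(v)$ be the vertex of the connected subtree $U_N$ nearest to $v$; nestedness makes the paths from $v$ to $f_N(v)$ an increasing chain, emptiness of the intersection prevents $f_N(v)$ from stabilizing, and the union of these paths is a ray, contradicting raylessness. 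Until you supply an argument of this kind, the hard direction is not proved.
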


Let us recall now the concept of star graphs.

\begin{definition}
\label{wer}
A tree $S$ is called a \textit{star graph} if there exists a vertex $c \in V(S)$, referred to as the \textit{center} of $S$, such that $c$ is adjacent to every vertex of the set $V(S) \setminus \{c\}$, and no other edges exist, i.e.,
\begin{equation*}
\{u, w\} \notin E(S) \quad \text{whenever} \quad u \neq c \neq w.
\end{equation*}
\end{definition}

Now we can define the class $\mathbf{US}$ of ultrametric spaces as follows: An ultrametric space $(X, d)$ belongs to $\mathbf{US}$ if there exists a labeled star graph $S(l)$ such that
\begin{equation*}
X = V(S) \quad \text{and} \quad d = d_l,
\end{equation*}
where $d_l$ is defined by \eqref{e1.1} with $T = S$. In this case, we say that $(X, d)$ is an $\mathbf{US}$-space generated by $S(l)$.

The following two results were obtained  in papers \cite{DR2025USGbLSG} and \cite{DCR2025OJAC} respectively.

\begin{theorem}\label{[2.1]}
Let $(X,d)$ be an ultrametric space. Then the following statements are equivalent:
\begin{enumerate}
\item[(i)]  $(X,d) \in {\bf US}$.
\item[(i)]  There is $x_{0} \in X$ such that the inequality
    \begin{equation}\label{zg}
        d(x_{0},x) \leq d(y,x) 
        \end{equation}
    holds whenever
    \begin{equation}\label{cd}     
        x_{0} \neq x \neq y. 
        \end{equation}
\end{enumerate}
\end{theorem}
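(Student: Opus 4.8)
The plan is to prove the two implications separately, using the center of the generating star graph as the distinguished point $x_0$ in one direction, and conversely building a star-graph labeling around a given $x_0$ in the other.

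For (i)$\Rightarrow$(ii), I would assume $(X,d)$ is generated by a labeled star graph $S(l)$ with center $c$ and take $x_0 := c$. The key structural fact is that in a star graph the unique path joining two non-central vertices $x$ and $y$ passes through $c$, while the path joining $c$ to any vertex $x$ is the single edge $\{c,x\}$. Thus formula \eqref{e1.1} yields $d(c,x) = \max\{l(c), l(x)\}$ and, for distinct non-central $x,y$, $d(x,y) = \max\{l(c), l(x), l(y)\}$. Hence for $x_0 = c \neq x \neq y$ I would split into the case $y = c$, where both $d(x_0,x)$ and $d(y,x)$ equal $d(c,x)$, and the case $y \neq c$, where the additional term $l(y)$ can only enlarge the maximum; in both cases $d(x_0,x) \leq d(y,x)$ follows.

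For the converse (ii)$\Rightarrow$(i), I would promote $x_0$ to the center of a star graph $S$ on vertex set $X$ (edges $\{x_0,x\}$ for all $x \neq x_0$) and define $l(x_0) := 0$ and $l(x) := d(x_0,x)$ for $x \neq x_0$. With this labeling, formula \eqref{e1.1} immediately gives $d_l(x_0,x) = \max\{0, d(x_0,x)\} = d(x_0,x)$, so the center distances match. The remaining, and principal, task is to verify that $d_l(x,y) = \max\{d(x_0,x), d(x_0,y)\}$ equals $d(x,y)$ for distinct non-central $x,y$. The strong triangle inequality supplies the bound $d(x,y) \leq \max\{d(x_0,x), d(x_0,y)\}$ for free, and hypothesis (ii) is exactly what delivers the reverse inequality: assuming without loss of generality $d(x_0,x) \leq d(x_0,y)$, applying (ii) forces $d(x_0,y) \leq d(x,y)$, whence $d(x,y) = \max\{d(x_0,x), d(x_0,y)\}$ as required. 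Finally I would observe that the labeling is non-degenerate, since each edge $\{x_0,x\}$ satisfies $\max\{l(x_0),l(x)\} = d(x_0,x) > 0$, so by Theorem~\ref{t1.4} the equality $d = d_l$ exhibits $(X,d)$ as an $\mathbf{US}$-space generated by $S(l)$.

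The step I expect to be the main obstacle is pinning down the correct substitution into hypothesis (ii) in the converse direction. Since (ii) is asymmetric — it bounds a distance $d(x_0,\cdot)$ from the distinguished point, and only under the restriction $x_0 \neq x \neq y$ — one must feed into its free variables the vertex $y$ realizing the larger center-distance $d(x_0,y)$ (placed in the slot whose distance to $x_0$ is being bounded), so as to extract precisely the lower bound $d(x_0,y) \leq d(x,y)$ that upgrades the strong triangle inequality to an equality. The degenerate situations, such as $|X| = 1$ or the case $y = x_0$ in the forward direction, are either vacuous or immediate and can be dismissed in a line.
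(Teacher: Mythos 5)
Your proof is correct. The paper itself does not prove Theorem~\ref{[2.1]} (it is imported from \cite{DR2025USGbLSG}), but your construction for the implication from the distinguished-point condition to membership in $\mathbf{US}$ --- taking $x_{0}$ as the center with labeling $l(x)=d(x_{0},x)$ --- is exactly the one the paper records in Proposition~\ref{fraas}, and your forward direction via the explicit formulas $d_l(c,x)=\max\{l(c),l(x)\}$ and $d_l(x,y)=\max\{l(x),l(c),l(y)\}$ is the standard argument, so your approach is essentially the intended one.
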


\begin{proposition}\label{fraas}
Let $(X,d)$ be an ultrametric space and let $x_0 \in X$. If, for $x,y \in X$, inequality \eqref{zg} holds whenever we have \eqref{cd}, then $(X,d)$ is generated by labeled star graph $S(l)$ with the center $x_0$ and the labeling $l: X \to \mathbb{R}^+$ defined as
\begin{equation*}
 l(x): = d(x,x_0)
\end{equation*}
for each $x \in X$.
\end{proposition}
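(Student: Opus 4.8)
The plan is to construct the labeled star graph $S(l)$ explicitly and verify directly that the ultrametric $d_l$ induced by \eqref{e1.1} coincides with $d$. Take $S$ to be the star graph with $V(S) = X$ and center $x_0$, so that its edges are precisely the pairs $\{x_0, x\}$ with $x \neq x_0$, and equip it with the labeling $l(x) := d(x, x_0)$. First I would handle the positions of $d_l$ that involve the center: for an edge $\{x_0, x\}$ the unique path joining its endpoints is $(x_0, x)$, hence $d_l(x_0, x) = \max\{l(x_0), l(x)\} = \max\{0, d(x, x_0)\} = d(x, x_0) = d(x_0, x)$. In particular $\max\{l(x_0), l(x)\} = d(x, x_0) > 0$, so the labeling is non-degenerate and Theorem~\ref{t1.4} guarantees that $d_l$ is indeed an ultrametric on $X$.

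The only remaining case is a pair of distinct points $x, y$ both different from $x_0$. In a star graph the unique path joining such points runs through the center, namely $(x, x_0, y)$, so that $d_l(x, y) = \max\{l(x), l(x_0), l(y)\} = \max\{d(x, x_0), d(y, x_0)\}$. Thus the whole proposition reduces to the single identity $d(x, y) = \max\{d(x, x_0), d(y, x_0)\}$ for all such $x, y$. The inequality $\leq$ here is immediate: it is exactly the strong triangle inequality applied with the intermediate point $x_0$.

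The heart of the argument, and the step where the hypothesis \eqref{zg} is used, is the reverse inequality $\max\{d(x, x_0), d(y, x_0)\} \leq d(x, y)$. I would argue by symmetry: assuming without loss of generality that $d(x, x_0) \geq d(y, x_0)$, it suffices to prove $d(x, x_0) \leq d(x, y)$. Since $x \neq x_0$ and $y \neq x$, condition \eqref{cd} is satisfied at the pair $(x, y)$, so the hypothesis yields precisely \eqref{zg} in the form $d(x_0, x) \leq d(y, x)$, that is $d(x, x_0) \leq d(x, y)$, which is what we want. Combining the two inequalities gives $d(x, y) = \max\{d(x, x_0), d(y, x_0)\} = d_l(x, y)$, and together with the computation on the edges this establishes $d = d_l$ on all of $X \times X$; hence $(X, d)$ is generated by $S(l)$. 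I expect no serious obstacle beyond keeping the variable substitution in \eqref{zg} correctly aligned with the symmetry reduction; once that bookkeeping is in place the argument is short.
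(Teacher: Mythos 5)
Your proof is correct and complete. Note that the paper itself gives no proof of Proposition~\ref{fraas} --- it is quoted as a result obtained in \cite{DCR2025OJAC} --- so there is nothing internal to compare against; your direct verification that the $d_l$ of \eqref{e1.1} coincides with $d$, reducing everything to the identity $d(x,y)=\max\{d(x,x_0),d(y,x_0)\}$ with ``$\leq$'' from the strong triangle inequality and ``$\geq$'' from \eqref{zg}, is exactly the canonical argument one would expect (the WLOG reduction is not even needed, since \eqref{zg} applies symmetrically to both $x$ and $y$, but it is harmless).
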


The next two theorems were proved in \cite{DCR2025OJAC}.

\begin{theorem}
\label{eeerlm}
Let $(X,d)$ be an ultrametric space. Suppose that either $X$ is finite, or $X$ has a limit point. Then $(X,d)$ is an $ {\bf US}$-space if and only if $(X,d)$ contains no four-point subspace which is weakly similar to $(X_4,d_4)$ or to $(Y_4,\rho_4)$.
\end{theorem}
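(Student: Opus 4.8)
The plan is to begin by replacing the weak-similarity hypothesis with a purely combinatorial condition on four-point subspaces. By Proposition~\ref{prop:3.4} together with the fact (Example~\ref{ex2.24}, Corollary~\ref{renjh}) that $G_{X_4,d_4}$ and $G_{Y_4,\rho_4}$ are both the cycle $C_4\cong K_{2,2}$, any four-point subspace weakly similar to $(X_4,d_4)$ or $(Y_4,\rho_4)$ has diametrical graph $C_4$. Conversely, I would check that a four-point ultrametric subspace $\{p_1,p_2,p_3,p_4\}$ whose diametrical graph is the $4$-cycle $p_1-p_2-p_3-p_4-p_1$ necessarily has four sides equal to its diameter $\rho$ and two diagonals $d(p_1,p_3),d(p_2,p_4)$ strictly below $\rho$; the obvious strictly increasing bijection of distance sets then exhibits it as weakly similar to $(Y_4,\rho_4)$ when the diagonals coincide and to $(X_4,d_4)$ when they differ. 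Thus the theorem reduces to: $(X,d)\in\mathbf{US}$ iff no four-point subspace has diametrical graph $C_4$. I would also record, via Theorem~\ref{[2.1]} and Proposition~\ref{fraas}, that a point $x_0$ is an admissible center exactly when $d(u,v)=\max\{d(u,x_0),d(v,x_0)\}$ holds for all $u,v\in X$.

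For the forward implication I would argue without using finiteness or the limit-point hypothesis. Assume $(X,d)\in\mathbf{US}$ with center $x_0$ and put $g(u):=d(u,x_0)$, so $d(u,v)=\max\{g(u),g(v)\}$ for all $u,v$. If four points carried a $C_4$ diametrical graph of diameter $\rho$ with cycle $p_1-p_2-p_3-p_4-p_1$, then the diagonals being strictly shorter than $\rho$ would force $g(p_i)<\rho$ for every $i\in\{1,2,3,4\}$, whence each side $\max\{g(p_i),g(p_{i+1})\}<\rho$, contradicting that every side equals $\rho$. Hence a $\mathbf{US}$-space admits no forbidden four-point subspace.

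The backward implication is where the hypothesis enters, and I would split it into two cases. For finite $X$ I would induct on $|X|$. By Theorem~\ref{thm:3.1} the graph $G_{X,d}$ is complete multipartite, its parts being the classes of the equivalence relation $d(x,y)<\operatorname{diam}X$; if two distinct parts each had two points, one pair from each would form a forbidden $C_4$, so at most one part, say $C_1$, is non-singleton. If there is none, $X$ is equidistant and trivially lies in $\mathbf{US}$; otherwise $C_1\subsetneq X$, the induction hypothesis yields a center $x_0$ of $C_1$, and a short check (points outside $C_1$ are at distance $\operatorname{diam}X$ from everything, points inside are controlled by the inner center) shows $x_0$ is a center of $X$. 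For the case where $X$ has a limit point I would instead verify directly that the limit point is a center. Fix distinct $a_n\to a$ with $d(a_n,a)\to 0$, and suppose $a$ were not a center: then there are $x\ne a$ and $y\ne x$ with $d(y,x)<d(a,x)=:\rho$, and the strong triangle inequality forces $d(a,y)=\rho$ as well. Choosing $n$ with $d(a_n,a)<d(x,y)$ and $a_n\notin\{a,x,y\}$, I would compute that among $\{x,y,a,a_n\}$ the distances $d(x,a),d(x,a_n),d(y,a),d(y,a_n)$ all equal $\rho$ while $d(x,y)$ and $d(a,a_n)$ are strictly smaller; this is a forbidden $C_4$ on the cycle $x-a-y-a_n-x$, a contradiction. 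Hence $a$ is a center and $(X,d)\in\mathbf{US}$.

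I expect the main obstacle to be the backward direction in the limit-point case, namely the realization that the limit point itself must serve as the center, and the packaging of the contradiction as a genuine forbidden four-point subspace. The hypothesis is indispensable here: the ``shrinking ray'' $X=\{y_n:n\in\mathbb N\}$ with $d(y_n,y_m)=1/\min\{n,m\}$ contains no forbidden $C_4$ yet lies outside $\mathbf{US}$, and it is exactly the absence of a limit point (equivalently, by Proposition~\ref{qsswd}, the failure of the shrinking balls to close up on a point) that blocks the construction of a center. A secondary point needing care is the preliminary equivalence of the first paragraph, in particular the converse: one must confirm that the two diagonal distances are unconstrained relative to each other, so that both the equal-diagonal ($Y_4$) and unequal-diagonal ($X_4$) patterns genuinely exhaust the four-point $C_4$ subspaces.
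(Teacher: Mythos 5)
The paper does not actually prove Theorem~\ref{eeerlm}: it is imported verbatim from \cite{DCR2025OJAC} (``The next two theorems were proved in \cite{DCR2025OJAC}''), so there is no in-paper argument to compare yours against. Judged on its own, your proof is correct and self-contained modulo the tools the paper does state. Your opening reduction (weakly similar to $(X_4,d_4)$ or $(Y_4,\rho_4)$ iff the diametrical graph is $C_4$) reproduces exactly the paper's Lemma~\ref{pidscy}, so that step is certified. Your forward direction via the identity $d(u,v)=\max\{d(u,x_0),d(v,x_0)\}$ for distinct $u,v$ (which is indeed equivalent to the center condition of Theorem~\ref{[2.1]}) is clean and, as you note, needs neither finiteness nor a limit point --- consistent with Theorem~\ref{8866gh}. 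In the backward direction, the finite induction on the parts of $G_{X,d}$ (at most one non-singleton part, else a $K_{2,2}$ appears; the inner center of that part serves as a global center) checks out, and the limit-point case is the genuinely nontrivial step: your verification that $d(x,a)=d(x,a_n)=d(y,a)=d(y,a_n)=\rho$ while $d(x,y)<\rho$ and $d(a,a_n)<\rho$ does produce a four-point subspace with diametrical graph $C_4$, using only the isosceles property of ultrametrics, so the limit point is forced to be a center. Your counterexample $d(y_n,y_m)=1/\min\{n,m\}$ correctly shows the hypothesis cannot be dropped (its four-point diametrical graphs are all $K_{1,3}$, yet no center exists), which is precisely the gap Theorem~\ref{mnb} is designed to close by passing to the completion. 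The only cosmetic slip is that your center identity should be asserted for \emph{distinct} $u,v$ only.
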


\begin{theorem}
\label{8866gh}
Let (X,d) be an {\bf US}-space.
Then $(Y, d|_{Y \times Y})$ is also an ${\bf US}$-space for every finite non-empty $Y \subseteq X$.
\end{theorem}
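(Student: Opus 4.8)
The plan is to reduce everything to the metric characterization of ${\bf US}$-spaces given in Theorem~\ref{[2.1]}. Since $(X,d)\in{\bf US}$, fix a center $x_0\in X$ with $d(x_0,x)\leq d(y,x)$ whenever $x_0\neq x\neq y$. The first step is to extract from this the clean identity
\[
d(u,v)=\max\{d(u,x_0),\,d(v,x_0)\}
\]
for every pair of distinct points $u,v\in X$. The inequality $\leq$ is just the strong triangle inequality applied to $u,x_0,v$; the reverse inequality $\geq$ follows by invoking the center condition twice, once with $(x,y)=(u,v)$ and once with $(x,y)=(v,u)$, which yields $d(u,x_0)\leq d(u,v)$ and $d(v,x_0)\leq d(u,v)$. (When one of $u,v$ equals $x_0$ the identity is trivial.)

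Next, given a finite nonempty $Y\subseteq X$, I would select a point $z\in Y$ minimizing the distance to the center, that is, $d(z,x_0)=\min_{y\in Y}d(y,x_0)$; finiteness of $Y$ guarantees that this minimum is attained. I then claim that $z$ serves as a center for $(Y,d|_{Y\times Y})$ in the sense of Theorem~\ref{[2.1]}. To verify this, fix distinct $x,y\in Y$ with $x\neq z$. Applying the max identity and the minimality $d(z,x_0)\leq d(x,x_0)$ gives $d(z,x)=d(x,x_0)$, while $d(y,x)=\max\{d(y,x_0),d(x,x_0)\}\geq d(x,x_0)$. Hence $d(z,x)\leq d(y,x)$, which is precisely the required inequality, and Theorem~\ref{[2.1]} then delivers $(Y,d|_{Y\times Y})\in{\bf US}$. (Alternatively, one could invoke Proposition~\ref{fraas} to build the generating star graph with center $z$ and labeling $l(x)=d(x,z)$.)

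I expect the only genuine subtlety to be the case $x_0\notin Y$, where the original center is unavailable and a new one must be produced inside $Y$; the nearest-point choice of $z$ resolves exactly this difficulty. It is worth noting that the finiteness of $Y$ is used precisely at this point, to ensure that $\inf_{y\in Y}d(y,x_0)$ is attained — for an infinite $Y$ with $x_0\notin Y$ this infimum may fail to be realized inside $Y$, and the argument would break down, which is consistent with the theorem being restricted to finite subspaces. When $x_0\in Y$ the minimizer is simply $z=x_0$, and the argument degenerates to the immediate observation that $x_0$ remains a center for $Y$.
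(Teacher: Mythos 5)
Your proposal is correct. Note, however, that the paper does not actually prove Theorem~\ref{8866gh}: it is imported without proof from \cite{DCR2025OJAC} (``The next two theorems were proved in \cite{DCR2025OJAC}''), so there is no in-paper argument to compare against. On its own merits, your derivation is sound and self-contained modulo Theorem~\ref{[2.1]}: the identity $d(u,v)=\max\{d(u,x_0),d(v,x_0)\}$ for distinct $u,v$ follows exactly as you say (strong triangle inequality one way, the center condition applied to both orderings the other way), and the nearest-point choice of $z\in Y$ then verifies condition (ii) of Theorem~\ref{[2.1]} for the subspace, since $d(z,x)=d(x,x_0)\leq\max\{d(y,x_0),d(x,x_0)\}=d(y,x)$ for all distinct $x,y\in Y$ with $x\neq z$. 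Your closing remark about where finiteness enters is also exactly right and is consistent with the paper's broader theme: for instance, in the space $(\mathbb{R}^+,d^+)$ of Example~\ref{fgjkdfb} the infinite subset $\{1/n:n\in\mathbb{N}\}$ fails to be an ${\bf US}$-space precisely because the infimum of distances to the center $0$ is not attained, so the restriction to finite $Y$ cannot be dropped.
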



The original purpose of this paper was to prove the following conjecture, formulated in \cite{DCR2025OJAC}.

\begin{conjecture}
\label{sepjtg}
The following statements are equivalent for every infinite ultrametric space $(X,d)$:
\begin{itemize}[left=10pt]
\item[(i)]  $(X, d) \notin {\bf US}.$

\item[(ii)] $(X, d)$ contains a four-point subspace which is weakly similar either to $(X_4,d_4)$ or to $(Y_4,\rho_4)$.
\end{itemize}
\end{conjecture}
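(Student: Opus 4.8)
The plan is to treat the two implications separately, since only one of them is genuinely in doubt. The implication $(ii)\Rightarrow(i)$ holds for \emph{every} ultrametric space and is the routine direction. If a four-point subspace $W\subseteq X$ is weakly similar to $(X_4,d_4)$ or $(Y_4,\rho_4)$, then by Proposition~\ref{prop:3.4} its diametrical graph is isomorphic to $C_4$ (Example~\ref{ex2.24}); the spaces $(X_4,d_4)$ and $(Y_4,\rho_4)$ are not $\mathbf{US}$-spaces (by Theorem~\ref{eeerlm} a finite space is $\mathbf{US}$ iff it contains no forbidden four-point subspace, and each of them is such a subspace of itself), and the $\mathbf{US}$-criterion of Theorem~\ref{[2.1]} is phrased purely through inequalities $d(x_0,x)\le d(y,x)$ between distances, hence is invariant under the monotone rescaling effected by a weak similarity. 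Thus $W\notin\mathbf{US}$, and Theorem~\ref{8866gh} forces $(X,d)\notin\mathbf{US}$.

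The whole difficulty therefore lies in $(i)\Rightarrow(ii)$, which I would attack by contraposition: assuming $(X,d)$ contains no forbidden four-point subspace, prove $(X,d)\in\mathbf{US}$. Theorem~\ref{eeerlm} already delivers exactly this conclusion whenever $X$ is finite or possesses a limit point, so the only remaining case is that of an \emph{infinite discrete} space having no limit point. Because every $\mathbf{US}$-space is complete (star graphs are rayless, so this is Theorem~\ref{rebv}), the natural idea is to pass to the completion $(\widehat{X},\widehat{d})$: any Cauchy sequence of distinct points of $X$ acquires a limit in $\widehat{X}$, so $\widehat{X}$ has a limit point and Theorem~\ref{eeerlm} becomes applicable to it; one would then try to transport the conclusion back to $X$ and to compare the forbidden-subspace condition on $X$ with that on $\widehat{X}$.

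Here I expect the main obstacle to arise, and in fact to be fatal to the statement as written. Completeness of $\mathbf{US}$-spaces means that \emph{every} incomplete ultrametric space already satisfies $(i)$, so the conjecture would force every incomplete infinite ultrametric space to contain a forbidden four-point subspace, and this is false. Consider $X=\{x_n:n\in\mathbb{N}\}$ with
\[
d(x_n,x_m)=2^{-\min\{n,m\}}\qquad(n\neq m).
\]
Among any three points the two larger distances coincide, so $d$ is an ultrametric; since $d(x_n,x_{n+1})=2^{-n}\to 0$, the sequence $(x_n)$ is Cauchy by Proposition~\ref{prop:ultrametric-cauchy}, yet it has no limit in $X$, so $X$ is incomplete and hence $X\notin\mathbf{US}$ (equivalently, no point is a nearest neighbour of all others, so Theorem~\ref{[2.1]} fails). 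On the other hand, for indices $i_1<i_2<i_3<i_4$ the point $x_{i_1}$ lies at the common maximal distance $2^{-i_1}$ from the other three while all remaining distances are strictly smaller, so the diametrical graph of every four-point subspace is the star $K_{1,3}$, which is not isomorphic to $C_4$ (Corollary~\ref{renjh}). Consequently $X$ contains \emph{no} subspace weakly similar to $(X_4,d_4)$ or $(Y_4,\rho_4)$: statement $(i)$ holds while $(ii)$ fails, so Conjecture~\ref{sepjtg} is false as stated.

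I therefore expect the correct resolution to be negative for direct membership and to be recoverable only in an embeddability form, which is precisely the content announced for Theorem~\ref{mnb}. In the example above the completion $\widehat{X}$ is obtained by adjoining a single limit point $x_\infty$ with $\widehat{d}(x_\infty,x_m)=2^{-m}$; this $x_\infty$ \emph{is} a global nearest neighbour, so $\widehat{X}\in\mathbf{US}$ by Proposition~\ref{fraas}, and $X$ embeds isometrically into an $\mathbf{US}$-space even though it is not one. This is the phenomenon the corrected theorem must capture: the right invariant is not membership of $X$ but the structure of its completion. The concrete program, then, is to prove that an infinite ultrametric space embeds isometrically into an $\mathbf{US}$-space if and only if its completion contains no four-point subspace weakly similar to $(X_4,d_4)$ or $(Y_4,\rho_4)$, and to recover a membership statement only under the additional hypothesis that $(X,d)$ is itself complete.
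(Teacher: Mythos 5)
Your analysis is essentially correct, and it exposes a genuine defect in how the paper handles this statement. The direction $(ii)\Rightarrow(i)$ is fine and is argued exactly as the paper would argue it: a weak similarity preserves the order of distances, so the criterion of Theorem~\ref{[2.1]} is invariant under weak similarity, the spaces $(X_4,d_4)$ and $(Y_4,\rho_4)$ visibly fail that criterion, and Theorem~\ref{8866gh} then forces $(X,d)\notin{\bf US}$. Your counterexample to $(i)\Rightarrow(ii)$ is also valid: with $d(x_n,x_m)=2^{-\min\{n,m\}}$ the two largest of any three distances coincide, so $d$ is an ultrametric; the sequence $(x_n)_{n\in\mathbb{N}}$ is Cauchy by Proposition~\ref{prop:ultrametric-cauchy} but has no limit in $X$, so $(X,d)$ is incomplete and hence $(X,d)\notin{\bf US}$ by Proposition~\ref{piuu}; and for $i_1<i_2<i_3<i_4$ the diameter $2^{-i_1}$ is attained exactly on the three pairs containing $x_{i_1}$, so every four-point diametrical graph is $K_{1,3}$, not $C_4$, and Lemma~\ref{pidscy} rules out weak similarity to $(X_4,d_4)$ or $(Y_4,\rho_4)$. (Equivalently, your space is the subspace $\{2^{-n}:n\in\mathbb{N}\}$ of the ${\bf US}$-space $(\mathbb{R}^+,d^+)$ of Example~\ref{fgjkdfb}, so all of its four-point subspaces are themselves ${\bf US}$-spaces by Theorem~\ref{8866gh}.) Thus $(i)$ holds while $(ii)$ fails, and the equivalence is false as literally written.

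The paper does not in fact contain a proof of Conjecture~\ref{sepjtg} in this form. What it proves is Theorem~\ref{mnb}, in which ``$(X,d)\notin{\bf US}$'' has been replaced by ``$(X,d)$ admits no isometric embedding into a ${\bf US}$-space''; the sentence ``The following theorem shows that Conjecture~\ref{sepjtg} is true'' is accurate only for that reformulation, and your example shows the two formulations are genuinely inequivalent. Your predicted repair is precisely Theorem~\ref{mnb}, and your proposed strategy --- pass to the completion, observe that a Cauchy sequence of distinct points yields a limit point there, apply Theorem~\ref{eeerlm} to the enlarged space, and transport the forbidden-subspace condition between $X$ and the completion --- is the strategy the paper implements through Lemmas~\ref{fint}, \ref{gfjhkg} and \ref{476}. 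The one ingredient you do not anticipate is the case $\Delta=\inf D_0(X)>0$ with $\Delta\notin D_0(X)$, where no Cauchy sequence of distinct points exists; the paper handles it by the shift $d\mapsto d-\Delta$ on distinct points, which reduces to the case $\inf D_0(X)=0$ without changing any diametrical graph. So your refutation of the stated conjecture stands, and your corrected statement coincides with the theorem the paper actually proves.
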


\section{Lemmas and Main result}

\hspace{5 mm}

Let us start from the following proposition.

\begin{proposition}\label{piuu}
Let $(X,d)$ be an $\bf US$--space. Then $(X,d)$ is complete.
\end{proposition}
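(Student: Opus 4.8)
The plan is to reduce the statement to Theorem~\ref{rebv}, which already characterizes completeness of $(V(T),d_l)$ for non-degenerate labelings in terms of raylessness of $T$. By definition, $(X,d)\in\mathbf{US}$ means that $X=V(S)$ and $d=d_l$ for some labeled star graph $S(l)$, with $d_l$ given by \eqref{e1.1}. Since $(X,d)$ is an ultrametric space, Theorem~\ref{t1.4} guarantees that the labeling $l$ is non-degenerate, i.e. inequality \eqref{gjj1ks} holds on every edge of $S$. Thus it suffices to verify hypothesis $(ii)$ of Theorem~\ref{rebv}, namely that the star graph $S$ is rayless; statement $(i)$ of that theorem then delivers completeness of $(V(S),d_l)=(X,d)$.

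The key step, and essentially the only thing to check, is that a star graph contains no ray. First I would observe that, by Definition~\ref{wer}, every edge of $S$ is incident to the center $c$. Consequently any path $(v_1,\ldots,v_k)$ in $S$ has at most three vertices: if $k\geq 4$, then the three consecutive edges $\{v_1,v_2\}$, $\{v_2,v_3\}$, $\{v_3,v_4\}$ would each contain $c$, but the intersection of the first two edges is $\{v_2\}$ and that of the last two is $\{v_3\}$, forcing $c=v_2$ and $c=v_3$, which contradicts $v_2\neq v_3$. Since a ray contains arbitrarily long paths, $S$ has no ray, that is, $S$ is rayless.

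Combining these two observations, Theorem~\ref{rebv} immediately yields that $(X,d)$ is complete, which is the assertion of Proposition~\ref{piuu}. I do not expect a genuine obstacle here: the raylessness of a star graph is immediate from its definition, and the only point requiring care is the appeal to Theorem~\ref{t1.4}, which ensures that $l$ is non-degenerate so that Theorem~\ref{rebv} applies.

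As a self-contained alternative, one could argue directly from the structure of $d_l$. Given a Cauchy sequence $(x_n)_{n\in\mathbb{N}}$ that is not eventually constant, it has infinitely many distinct terms; since the center $c$ lies on the path between any two distinct vertices, one has $l(c)\leq d(x_n,x_m)$ for distinct terms, and Proposition~\ref{prop:ultrametric-cauchy} together with the Cauchy condition forces $l(c)=0$. Then $d(x,y)=\max\{l(x),l(y)\}$ for all distinct $x,y$, and a $\limsup$ argument shows $l(x_n)\to 0$, whence $d(x_n,c)=l(x_n)\to 0$ and $x_n\to c\in X$. Here the only difficulty is the bookkeeping in the case analysis (eventually constant versus infinitely many distinct terms) rather than any real mathematical obstruction, which is why the reduction to Theorem~\ref{rebv} is preferable.
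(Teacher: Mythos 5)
Your proposal is correct and follows essentially the same route as the paper: both deduce non-degeneracy of the labeling from Theorem~\ref{t1.4}, observe that a star graph is rayless by Definition~\ref{wer}, and conclude via Theorem~\ref{rebv}. Your explicit verification that any path in a star graph has at most three vertices simply fills in a detail the paper leaves implicit.
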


\begin{proof}
Since $(X,d)\in {\bf US}$ holds, there is a labeled star graph $S(l)$ such that
$
(X,d) = (V(S), d_l).
$
Since $d_l$ is an ultrametric, Theorem~\ref{t1.4} shows that the labeling 
$
l \colon V(S)\to \mathbb{R}^+
$
is non-degenerate.  

Definition~\ref{wer} implies that $S$ is rayless. Hence $(X,d)$ is complete by Theorem~\ref{rebv}.
\end{proof}

The next lemma gives us a rephrasing of Statement $(ii)$ of Conjecture~\ref{sepjtg}.

\begin{lemma}\label{pidscy}
Let $(X,\rho)$ be a four-point ultrametric space. Then the following statements are equivalent:
\begin{itemize}[left=10pt]
    \item[(i)] $(X,\rho)$ is weakly similar either to $(X_4, d_4)$ or to $(Y_4, \rho_4)$.
    
    \item[(ii)] The diametrical graph $G_{X,\rho}$ is isomorphic to $C_4$.
\end{itemize}

\end{lemma}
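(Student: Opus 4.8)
The plan is to prove the two implications separately, relying on Proposition~\ref{prop:3.4} for one direction and on a direct structural analysis of four-point ultrametric spaces for the other.

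For the implication $(i)\Rightarrow(ii)$, suppose $(X,\rho)$ is weakly similar to $(X_4,d_4)$ or to $(Y_4,\rho_4)$. Both models are four-point spaces, so $|X| = 4 \ge 2$, and Proposition~\ref{prop:3.4} applies: weakly similar finite spaces of equal cardinality have isomorphic diametrical graphs. Hence $G_{X,\rho}$ is isomorphic to $G_{X_4,d_4}$ or to $G_{Y_4,\rho_4}$, and by Example~\ref{ex2.24} each of these diametrical graphs is the cycle $C_4$. Therefore $G_{X,\rho} \cong C_4$.

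For the converse $(ii)\Rightarrow(i)$, I would extract from the isomorphism $G_{X,\rho}\cong C_4$ an enumeration $X = \{x_1,x_2,x_3,x_4\}$ in which the diameter-realizing pairs (the edges of $G_{X,\rho}$, by Definition~\ref{d5.2}) are exactly $\{x_1,x_2\},\{x_2,x_3\},\{x_3,x_4\},\{x_4,x_1\}$. Writing $D := \operatorname{diam} X$, this means $\rho(x_1,x_2)=\rho(x_2,x_3)=\rho(x_3,x_4)=\rho(x_4,x_1)=D$, while the two remaining (``diagonal'') pairs satisfy $a := \rho(x_1,x_3) < D$ and $b := \rho(x_2,x_4) < D$; since the four points are distinct, $a,b \in (0,D)$. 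A short check of the four triangles $\{x_1,x_2,x_3\}$, $\{x_2,x_3,x_4\}$, $\{x_3,x_4,x_1\}$, $\{x_4,x_1,x_2\}$ confirms that in each triangle the two largest sides equal $D$, so the strong triangle inequality imposes no relation between $a$ and $b$ beyond $a,b<D$. Using the rotational automorphism of $C_4$ that cyclically permutes the $x_i$ (and thereby interchanges the two diagonals), I may assume without loss of generality that $a \le b$.

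The remaining step is to construct the weak similarity by cases according to Definition~\ref{scguite}. If $a < b$, then $D_0(X) = \{a,b,D\}$ consists of three distinct positive values, matching $D_0(X_4) = \{1,2,3\}$; I would set $\Phi(x_i) = x_i$ into $(X_4,d_4)$ and take the strictly increasing bijection $f \colon D(X_4) \to D(X)$ with $f(0)=0$, $f(1)=a$, $f(2)=b$, $f(3)=D$, then verify $\rho(u,v) = f(d_4(\Phi(u),\Phi(v)))$ on all six pairs. If $a = b$, then $D_0(X) = \{a,D\}$ has two distinct positive values, matching $D_0(Y_4)=\{2,3\}$; mapping $\Phi(x_i) = y_i$ into $(Y_4,\rho_4)$ and taking $f$ with $f(0)=0$, $f(2)=a$, $f(3)=D$ yields the required weak similarity. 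The main obstacle, though a mild one, is bookkeeping the correspondence between the diagonals of $(X,\rho)$ and those of the model space so that $f$ comes out genuinely increasing; this is exactly where the normalization $a\le b$ via the $C_4$-automorphism is essential, since it guarantees that the smaller diagonal of $(X,\rho)$ is matched with the smaller diagonal of $(X_4,d_4)$. Once the enumeration is fixed correctly, verifying the weak-similarity identity reduces to checking the handful of distance values, and the equivalence follows.
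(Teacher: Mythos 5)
Your proposal is correct and follows essentially the same route as the paper: Proposition~\ref{prop:3.4} together with Example~\ref{ex2.24} for $(i)\Rightarrow(ii)$, and for $(ii)\Rightarrow(i)$ a case split on the two sub-diametral (diagonal) distances, mapping onto $(Y_4,\rho_4)$ when they coincide and onto $(X_4,d_4)$ when they differ. The only cosmetic difference is that you normalize $a\le b$ up front via a $C_4$-automorphism, where the paper instead handles the reversed inequality as a separate third case with an explicitly swapped map $\Phi_3$.
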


\begin{proof}
    
$(i) \Rightarrow (ii)$. 
Let $(X,\rho)$ be weakly similar either to $(X_4, d_4)$ or to $(Y_4, \rho_4)$.

It was noted in Example \ref{ex2.24} that the diametrical graphs $G_{X_4, d_4}$ and $G_{Y_4, \rho_4}$ are isomorphic to the cycle $C_4$. Consequently, the diametrical graphs $G_{X,\rho}$ and $C_4$ also are isomorphic by Proposition~\ref{prop:3.4}.

$(ii) \Rightarrow (i)$. 
Let $G_{X,\rho}$ and $C_4$ be isomorphic.
Then there is a numeration of the points of $X$ so that
$G_{X,\rho}$
can be depicted 
by Figure~\ref{cis3}

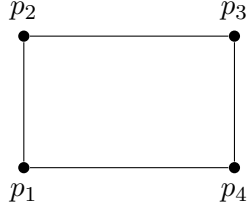
\begin{figure}[H]
  \centering
  \begin{tikzpicture}[scale=0.7]
    \node[fill=black, circle, inner sep=1.5pt, label=below:$p_1$] (p1) at (0,0) {};
    \node[fill=black, circle, inner sep=1.5pt, label=below:$p_4$] (p2) at (4,0) {};
    \node[fill=black, circle, inner sep=1.5pt, label=above:$p_3$] (p3) at (4,2.5) {};
    \node[fill=black, circle, inner sep=1.5pt, label=above:$p_2$] (p4) at (0,2.5) {};

    \draw (p1) -- (p2);
    \draw (p2) -- (p3);
    \draw (p3) -- (p4);
    \draw (p4) -- (p1);
  \end{tikzpicture}
  \caption{The diametrical graph $G_{X,\rho}$.}
  \label{cis3}
\end{figure}

In this case, Definition~\ref{d5.2}
implies the strict inequalities 
\begin{equation}\label{di221}
    \rho(p_1,p_3)<\operatorname{diam} X
\end{equation}
and
\begin{equation}\label{di222}
    \rho(p_2,p_4)<\operatorname{diam} X.
\end{equation}
If the equality 
\begin{equation}\label{wseer}
\rho(p_{1},p_{3})=\rho(p_{2},p_{4}) 
\end{equation}
holds, then the mapping $
\Phi_{1}\colon X \to Y_{4}
$
defined as $
\Phi_{1}(p_{i}) = y_{i},$ $i=1,2,3,4,
$
is a weak similarity of the ultrametric spaces $(X,\rho)$ and $(Y_{4},\rho_{4})$.

Indeed, since $G_{X,\rho}$ and $C_4$ are isomorphic, inequalities \eqref{di221}, \eqref{di222} and the
definition of $(Y_4, \rho_4)$ imply that
\begin{equation*}
|D(X)| = |D(Y_4)| = 3,
\end{equation*}
i.e., the distance sets of $(X,\rho)$ and $(Y_4, \rho_4)$ are three-point subsets of $\mathbb{R}^+$.  
Hence there exists the unique strictly increasing bijective function $f_1 \colon D(Y_4) \to D(X).$
Using Figures~\ref{cis}--\ref{cis3} we can see that equality~\eqref{joi83} holds for all $x,y \in X$ with $(X,d) = (X,\rho),$  $(Y,\delta) = (Y_4, \rho_4),$
and $f = f_1$.  
Thus, by Definition~\ref{scguite}, the mapping $\Phi_1 \colon X \to Y_4$ is a weak similarity of $(X,\rho)$ and $(Y_4, \rho_4)$.

If equality \eqref{wseer} is false, then we have either
\begin{equation}
\label{nabla1}
\rho(p_1, p_3) < \rho(p_2, p_4), 
\end{equation}
or
\begin{equation}
    \label{nabla2}
\rho(p_2, p_4) < \rho(p_1, p_3).
\end{equation}

Suppose that \eqref{nabla1} holds. Then  arguing as in the case of equality~\eqref{wseer}, we can show that the mapping $\Phi_2 \colon X \to X_4,$ defined as  $\Phi_2(p_i) = x_i,$ $i=1,2,3,4,$
is a weak similarity of $(X,\rho)$ and $(X_4, d_4)$.

To complete the proof we only note that the mapping $
\Phi_3 \colon X \to X_4 $
defined by equalities $\Phi_3(p_1) = x_2,$ 
$\Phi_3(p_2) = x_1,$
$\Phi_3(p_3) = x_4,$ 
$\Phi_3(p_4) = x_3$
is the desired
weak similarity of $(X,\rho)$ and $(X_{4},d_{4})$ for the case when inequality~\eqref{nabla2} holds. 

\end{proof}

\begin{lemma}
    \label{fint}
    Let $(X,d)$ be an infinite ultrametric space, $x_0$ and $y_0$ be two distinct fixed points of $X$, and let the following conditions hold:

\begin{itemize}[left=10pt]
\item[(i)] The inequality
\begin{equation}\label{eq1}
d(x_0,y_0) \leq d(p_1,p_2) 
\end{equation}
 holds for all distinct $ p_1,p_2 \in X.$

\item[(ii)] $(X,d)$ contains no four-point subspace whose diametrical graph is isomorphic to the cycle $C_4$.
\end{itemize}

Then $(X,d)$ is an ${\bf US}$-space.

\end{lemma}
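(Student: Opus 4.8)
The plan is to exhibit an explicit center for $(X,d)$ in the sense of Theorem~\ref{[2.1]} and to show that the distinguished point $x_0$ may itself be taken as this center; then $(X,d)\in\mathbf{US}$ is immediate. Write $m:=d(x_0,y_0)$. Condition~(i) says precisely that $m$ is the smallest distance occurring in $X$, and that it is realised by the pair $(x_0,y_0)$. By Theorem~\ref{[2.1]} it therefore suffices to verify
\[
d(x_0,x)\le d(y,x)\qquad\text{whenever } x_0\neq x\neq y,
\]
and since this is trivial when $y=x_0$, I only need to treat triples $x_0,x,y$ of pairwise distinct points.

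First I would localise to a finite subspace. Fixing such $x,y$, set $F:=\{x_0,y_0,x,y\}$ with the restriction of $d$; this is a finite ultrametric space with $|F|\in\{3,4\}$. Being a subspace of $X$, it inherits condition~(ii): no four-point subspace of $F$ has diametrical graph isomorphic to $C_4$, so by Lemma~\ref{pidscy} none is weakly similar to $(X_4,d_4)$ or $(Y_4,\rho_4)$. Theorem~\ref{eeerlm}, in its finite form, then yields $F\in\mathbf{US}$, and a second application of Theorem~\ref{[2.1]} furnishes a center $c\in F$, that is, a point satisfying $d(c,z)\le d(z,w)$ whenever $c\neq z\neq w$ in $F$.

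The heart of the argument is the following structural observation, proved inside $F$: since $(x_0,y_0)$ realises the minimal distance $m$ of $F$, the point $x_0$ is itself a center of $F$. If $c=x_0$ this is immediate; otherwise, applying the center property of $c$ to the pair $x_0,y_0$ gives $d(c,x_0)\le m$, whence $d(c,x_0)=m$ by minimality. Now take any $z,w\in F$ with $x_0\neq z\neq w$. If $z=c$ then $d(x_0,z)=m\le d(z,w)$ by minimality; if $z\neq c$ then the strong triangle inequality together with $d(c,z)\ge m$ gives $d(x_0,z)\le\max\{d(x_0,c),d(c,z)\}=d(c,z)\le d(z,w)$, the last step by the center property of $c$. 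Thus $d(x_0,z)\le d(z,w)$ in every case, so $x_0$ is a center of $F$. Specialising to $z=x$, $w=y$ yields $d(x_0,x)\le d(x,y)$, exactly the inequality required. As this holds for every admissible pair $x,y$, the point $x_0$ is a center of $(X,d)$, and Theorem~\ref{[2.1]} gives $(X,d)\in\mathbf{US}$.

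The only real obstacle is the structural step of the previous paragraph: it is not a priori clear that a single point serves as a center for the whole infinite space, nor that $x_0$ is the correct choice, since different finite subspaces could conceivably demand different centers. This is resolved by never comparing centers across subspaces — I commit to $x_0$ from the outset and use the finite subspace $F$ only to guarantee the existence of \emph{some} center $c$, from which the one inequality $d(x_0,x)\le d(x,y)$ follows regardless of where $c$ sits. The computation promoting $x_0$ to a center, via the two cases $z=c$ and $z\neq c$ and a single use of the strong triangle inequality, is elementary but is genuinely the crux, as Proposition~\ref{kkigds} alone handles only the subcase $c\in\{x_0,y_0\}$.
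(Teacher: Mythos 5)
Your proof is correct, and it handles the crux differently from the paper. Both arguments verify the criterion of Theorem~\ref{[2.1]} for the point $x_0$ and both localize to the four-point set $\{x_0,y_0,u,v\}$, but from there they diverge. The paper stays elementary and self-contained: it first uses Proposition~\ref{kkigds} to reduce to the case $u,v\notin\{x_0,y_0\}$, then classifies the diametrical graph of $A=\{x_0,y_0,u,v\}$ as one of $K_{1,1,1,1}$, $K_{1,1,2}$, $K_{1,3}$ (Theorem~\ref{thm:3.1}, Proposition~\ref{zbgqee}, with $K_{2,2}\cong C_4$ excluded by hypothesis (ii) and Corollary~\ref{renjh}), and in each case shows directly that $d(u,v)=\operatorname{diam}A$, which forces the required inequality. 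You instead translate (ii) into the weak-similarity form via Lemma~\ref{pidscy}, invoke the finite case of Theorem~\ref{eeerlm} to conclude $F\in\mathbf{US}$, extract an abstract center $c$ of $F$ from Theorem~\ref{[2.1]}, and then prove a small ``center relocation'' fact: since $d(c,x_0)$ equals the minimal distance $m=d(x_0,y_0)$ of $F$, a single application of the strong triangle inequality promotes $x_0$ to a center of $F$. I checked the relocation step and it is sound (the case split $z=c$ versus $z\neq c$, the bound $d(c,x_0)\le d(y_0,x_0)=m$ from the center property, and the reverse bound from minimality all go through). What your route buys is the elimination of the case analysis and a reusable observation about moving centers to minimal-distance points; what it costs is reliance on Theorem~\ref{eeerlm}, a substantially heavier imported result that the paper deliberately reserves for the limit-point situation in Lemma~\ref{gfjhkg} and does not need here.
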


\begin{proof}
 Let $(i)$ and $(ii)$ be satisfied.  
It is necessary to prove the membership relation
\begin{equation}\label{eq2}
(X,d) \in {\bf US}.
\end{equation}

 Theorem~\ref{[2.1]} implies that \eqref{eq2} is valid if the inequality 
\begin{equation}\label{dfftgww}
    d(x_0,v)\leq d(u,v)
\end{equation}
holds for all distinct $u,v\in X$. 
If $u = x_0$, then inequality \eqref{dfftgww} evidently holds for each $v \in X$.
Therefore, it is sufficient to consider the case when $u \in X \setminus \{x_0\}$.
Proposition~\ref{kkigds} and condition $(i)$ imply the equality
\[
d(x_0,v) = d(y_0,v)
\]
 for every $v \in X \setminus \{x_0,y_0\}$.
Consequently it is sufficient to prove~\eqref{dfftgww} for the case when $u$ and $v$
are distinct points of $X \setminus \{x_0,y_0\}$.

Let $x_0, y_0, u,$ and $v$ be pairwise distinct, and let $(A,d_A)$ be the ultrametric space with 
\[
A:=\{x_0,y_0,u,v\}, \quad d_A := d\,\big|_{A\times A}.
\]

In order to prove inequality \eqref{dfftgww} we will describe the structure of the diametrical graph $G_{A,d_A}$.
It is clear that  $G_{A,d_A}$ has four vertices.
By Proposition~\ref{zbgqee}, every four-vertex complete multipartite graph is isomorphic to exactly one of the graphs $K_{1,1,1,1},$ $K_{1,1,2},$ $K_{1,3}$ or $K_{2,2}.$

Theorem \ref{thm:3.1} implies that the diametrical graph $G_{A,d_A}$ is a complete multipartite finite graph.
By condition $(ii)$, the diametrical graph $G_{A,d_A}$ and the cycle $C_4$ are not isomorphic. The graph $K_{2,2}$ and the cycle $C_4$ are isomorphic by Corollary 3.1.  Consequently, $G_{A,d_A}$ is isomorphic to one of the graphs
 $K_{1,1,1,1},$ $K_{1,1,2}$ or $K_{1,3}$,
see Figure~\ref{cis4}.

\begin{figure}[H]
  \centering
  \begin{tikzpicture}[
      vertex/.style={circle,draw,minimum size=3pt,inner sep=0pt,fill=black},
      lab/.style={font=\small}
    ]
    \begin{scope}[xshift=-5.5cm]
      \node[vertex] (a2) at (-1,1) {};
      \node[vertex] (a3) at ( 1,1) {};
      \node[vertex] (a4) at ( 1,-1) {};
      \node[vertex] (a1) at (-1,-1) {};
      \draw (a1) -- (a2) -- (a3) -- (a4) -- (a1);
      \draw (a1) -- (a3) (a2) -- (a4);
      \node[lab,below=12pt] at (0,-1.2) {$K_{1,1,1,1}$};
    \end{scope}

    \begin{scope}[xshift=-1cm]
      \node[vertex] (b2) at (-1,1) {};
      \node[vertex] (b3) at ( 1,1) {};
      \node[vertex] (b4) at ( 1,-1) {};
      \node[vertex] (b1) at (-1,-1) {};
      \draw (b1) -- (b2) -- (b3) -- (b4) -- (b1);
      \draw (b1) -- (b3);
      \node[lab,below=12pt] at (0,-1.2) {$K_{1,1,2}$};
    \end{scope}

    \begin{scope}[xshift=3cm]
      \node[vertex] (c1) at (-1,0) {};
      \node[vertex] (c2) at ( 0.9,1.0) {};
      \node[vertex] (c3) at ( 0.9,0.0) {};
      \node[vertex] (c4) at ( 0.9,-1.0) {};
      \draw (c1) -- (c2) (c1) -- (c3) (c1) -- (c4);
      \node[lab,below=18pt] at (0.45,-1) {$K_{1,3}$};
    \end{scope}
  \end{tikzpicture}
  \caption{ $G_{A,d_A}$ is isomorphic to one of the graphs $K_{1,1,1,1}$, $K_{1,1,2}$ or $K_{1,3}$.}
  \label{cis4}
\end{figure}
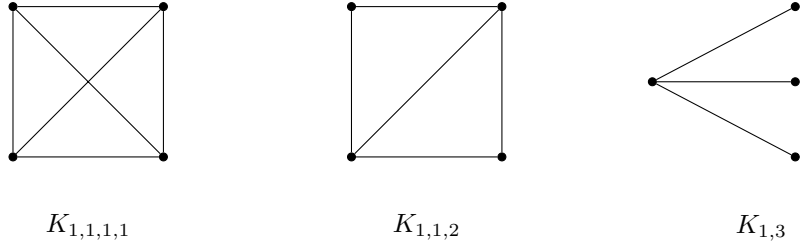

Now we are ready to prove
inequality~\eqref{dfftgww}.

If $G_{A,d_A}$ and $K_{1,1,1,1}$ are isomorphic,
then the distance between
any two distinct points of $(A,d_A)$ is equal to $\operatorname{diam} A$. Thus we have
\begin{equation}\label{eq:diam-eq}
d(x_0,v) = \operatorname{diam} A = d(u,v),
\end{equation}
that implies \eqref{dfftgww}.

If $G_{A,d_A}$ is isomorphic to $K_{1,1,2}$,
then there  is the unique pair of distinct
non-adjacent vertices of $G_{A,d_A}$.
From condition $(i)$ it follows that this pair coincides with $\{x_0,y_0\}$. Consequently the equality 
\begin{equation*}
    d(u,v)=\operatorname{diam}A
\end{equation*}
holds, that implies inequality~\eqref{dfftgww}.

Suppose now that $G_{A,d_A}$ is isomorphic to $K_{1,3}$. Then the ultrametric space $(A,d_A)$ contains 
 a unique point $ p$  such that the equality
\begin{equation}
    \label{dnjjff}
d(p,a) = \operatorname{diam}A 
\end{equation}
holds for each $ a \in A \setminus \{p\}.$ Let us assume that $p\in \{x_0,y_0\}$.
Condition $(i)$ implies the inequality
\[
d(x_0,y_0) \leq d(a_1,a_2)
\]
 for all distinct \(a_1,a_2 \in A\).
 Hence equality \eqref{dnjjff} and Proposition~\ref{kkigds} with \(X = A\) and \(\{x_1, x_2 \}= \{x_0,y_0\}\) give us 
\[
d(x_0,a) = \operatorname{diam}A
\]
for each $ a \in A \setminus \{x_0\}$,
and
\[
d(y_0,a) = \operatorname{diam}A 
\]
for each $ a \in A \setminus \{y_0\}$,
which contradicts the uniqueness of the point \(p \in A \) satisfying~\eqref{dnjjff} for each \(a \in A\setminus \{p\} \).
Thus the membership relation
\[
p \in \{u,v\}
\]
holds. Now using \eqref{dnjjff} with \(p=u,\) \( a=v\) and with \(p=v, \) \(a=u\), we obtain the equality
\[
d(u,v) = \operatorname{diam}A.
\]
The last equality implies inequality~\eqref{dfftgww}.  

Thus, inequality \eqref{dfftgww} is satisfied in all cases.

\end{proof}

The following lemma can be considered as an extension of Theorem~\ref{eeerlm}.

\begin{lemma}\label{gfjhkg}
Let $(X,d)$ be an infinite discrete ultrametric space that satisfies the following conditions:
\begin{itemize}[left=10pt]
\item[(i)] The completion $(Y,\rho)$ of  $(X,d)$ has a limit point $c$.

    \item[(ii)] $(X,d)$ contains no four-point subspace which is weakly similar to $(X_4,d_4)$ or to $(Y_4,\rho_4)$.
\end{itemize}

Then $(Y,\rho)$ is an ${\bf US}$-space.
\end{lemma}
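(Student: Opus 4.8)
The plan is to reduce the statement to Theorem~\ref{eeerlm} applied to the completion, and then to transfer the absence of ``bad'' four-point subspaces from $X$ up to $Y$. Throughout, I would identify $(X,d)$ with a dense subspace of $(Y,\rho)$ via the isometry guaranteed by Definition~\ref{ssmmcc}, so that $d = \rho|_{X\times X}$ and $X$ is dense in $Y$. Since condition $(i)$ says $(Y,\rho)$ has a limit point $c$, Theorem~\ref{eeerlm} applies to $(Y,\rho)$ and tells us that $(Y,\rho)\in\mathbf{US}$ if and only if $(Y,\rho)$ contains no four-point subspace weakly similar to $(X_4,d_4)$ or to $(Y_4,\rho_4)$. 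By Lemma~\ref{pidscy}, this is equivalent to saying that $(Y,\rho)$ contains no four-point subspace whose diametrical graph is isomorphic to $C_4$. Thus the whole task reduces to verifying this last condition for $Y$, knowing (again via Lemma~\ref{pidscy}) that it holds for $X$ by condition $(ii)$.

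The heart of the argument is the following transfer step. Suppose, for contradiction, that $Y$ contains four distinct points $q_1,q_2,q_3,q_4$ whose diametrical graph is $C_4$. Set $m := \min_{i\neq j}\rho(q_i,q_j)$, which is positive because the four points are pairwise distinct. Using density of $X$ in $Y$, I would choose $x_1,x_2,x_3,x_4 \in X$ with $\rho(q_i,x_i) < m$ for each $i$ (taking $x_i = q_i$ whenever $q_i$ already lies in $X$). The key elementary fact is that in an ultrametric space, if $\rho(a,a') < \rho(a,b)$ and $\rho(b,b') < \rho(a,b)$, then $\rho(a',b') = \rho(a,b)$; this follows from two applications of the strong triangle inequality. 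Applying it with $a = q_i$, $a' = x_i$, $b = q_j$, $b' = x_j$, and using $\rho(q_i,x_i), \rho(q_j,x_j) < m \le \rho(q_i,q_j)$, I obtain $\rho(x_i,x_j) = \rho(q_i,q_j)$ for all $i \neq j$. In particular the $x_i$ are pairwise distinct, so $\{x_1,x_2,x_3,x_4\}$ is a four-point subspace of $X$ isometric to $\{q_1,q_2,q_3,q_4\}$.

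Since these two four-point spaces are isometric, they are in particular weakly similar, so Proposition~\ref{prop:3.4} shows their diametrical graphs are isomorphic; hence the diametrical graph of $\{x_1,x_2,x_3,x_4\}$ is also isomorphic to $C_4$. By Lemma~\ref{pidscy}, this four-point subspace of $X$ is then weakly similar to $(X_4,d_4)$ or to $(Y_4,\rho_4)$, contradicting condition $(ii)$. Therefore $(Y,\rho)$ has no four-point subspace with diametrical graph $C_4$, and the reduction in the first paragraph yields $(Y,\rho)\in\mathbf{US}$, as required.

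I expect the main obstacle to be the transfer step of the second paragraph: one must ensure that replacing each $q_i$ by a nearby $x_i\in X$ disturbs none of the six pairwise distances, which is exactly what the ``perturbation stability'' of ultrametrics at the threshold $m$ guarantees. The two points needing care are that $m$ is strictly positive (ensured by distinctness of the $q_i$) and that the chosen $x_i$ remain pairwise distinct (ensured a posteriori by $\rho(x_i,x_j) = \rho(q_i,q_j) > 0$). Everything else is bookkeeping through the already-established equivalences of Theorem~\ref{eeerlm} and Lemma~\ref{pidscy}.
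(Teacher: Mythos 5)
Your proof is correct, but it follows a genuinely different route from the paper's. The paper does not argue on all of $(Y,\rho)$ directly: it forms the intermediate space $Y^*=\Phi(X)\cup\{c\}$, verifies that every four-point subset $A\subseteq Y^*$ avoids the forbidden configurations (the only nontrivial case being $c\in A$, which is handled by replacing $c$ with a single nearby point $p\in\Phi(X)$ and invoking Corollary~\ref{tbhjss}), concludes via Theorem~\ref{eeerlm} that $(Y^*,\rho^*)\in\mathbf{US}$, and then transfers this to $(Y,\rho)$ by completeness of $\mathbf{US}$-spaces (Proposition~\ref{piuu}) together with uniqueness of completions (Proposition~\ref{fvhjjm}). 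You instead apply Theorem~\ref{eeerlm} to $(Y,\rho)$ itself and rule out a bad quadruple $q_1,\dots,q_4$ in $Y$ by perturbing all four points simultaneously into the dense subset $X$, using the standard ultrametric stability fact that distances are unchanged when each point is moved by less than the minimal pairwise distance $m$; Lemma~\ref{pidscy} and Proposition~\ref{prop:3.4} then convert the resulting isometric quadruple in $X$ into a contradiction with hypothesis~(ii). Your version is shorter and more self-contained: it bypasses $Y^*$, Proposition~\ref{piuu} and Proposition~\ref{fvhjjm} entirely, and it never uses the discreteness of $(X,d)$ (which the paper invokes only to ensure $c\notin\Phi(X)$), so it actually proves a slightly stronger statement. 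The paper's approach, by contrast, only ever needs to perturb the single point $c$, at the cost of the detour through the completion-uniqueness machinery. Both arguments are sound.
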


\begin{proof}
 Let $\Phi \colon X \to Y$ be an isometric embedding of
$(X,d)$ in $(Y,\rho)$ such that $\Phi(X)$ is a dense
subset of $(Y,\rho)$.
(This isometric embedding exists according to
Definition~\ref{ssmmcc}).
Let $Y^*$ denote the union of the set $\Phi(X)$
and the singleton $\{c\}$,
\[
Y^* = \Phi(X) \cup \{c\},
\]
and let $\rho^*$ be the restriction
of the ultrametric $\rho : Y \times Y \to \mathbb{R}^+$
on the set $Y^* \times Y^*$,
\[
\rho^* = \rho|_{Y^* \times Y^*}.
\]

It should be noted that $c \notin \Phi(X)$ holds because $(X,d)$ is discrete and, consequently, $\Phi(X)$ is a discrete subset of $(Y,\rho)$. Moreover, $c$ is a limit point of the set $\Phi(X)$ because this set is dense in $(Y,\rho)$ and $c$ is a limit point of $(Y,\rho)$.

We claim that $(Y^*, \rho^*)$ is an $\bf{US}$-space.

Since $(Y^*, \rho^*)$ contains a limit point,  Theorem~\ref{eeerlm} implies that the ultrametric space $(Y^*, \rho^*)$ is an 
${\bf US}$-space if $(Y^*, \rho^*)$ has no four-point subspace which
is weakly similar to $(X_{4}, d_4)$ or $(Y_4, \rho_4)$.

Let $A$ be an arbitrary four-point subset of the set $Y^*$ and let $\rho^*_A = \rho^*|_{A \times A}$. 

If $c\notin A$, then condition $(ii)$ implies that $(A,\rho^*_A)$ is not weakly similar to $(X_4,d_4)$ or to $(Y_4,\rho_4)$.

Let us consider the case when $c\in A$.

Since \(c\) is a limit point of the set \(\Phi(X)\) in the space \((Y^*, \rho^*)\), 
there exists a point \(p \in Y^*\) such that 
\begin{equation}
    \label{thkq2}
p \notin A 
\end{equation}
and 
\begin{equation}
    \label{thkq21}
\rho^*(c,p) <  \rho^*(a_1,a_2)
\end{equation}
holds for all distinct $a_1,a_2\in A$.
Let us denote by \(A_p\) the set \(A \cup \{p\}\) and consider the ultrametric space
\((A_p, \rho^*_{A_p})\) in which the ultrametric \(\rho^*_{A_p}\) is defined as
\[
\rho^*_{A_p} = \rho^* \big|_{A_p \times A_p}.
\]

Let  $r \in (0,\infty)$ satisfy the double inequality
\begin{equation}
    \label{unfds1}
\rho^*(c,p) < r < \min\{ \rho^*(a_1,a_2) : a_1 \ne a_2, \; a_1,a_2 \in A \}.
\end{equation}
Then, using Lemma \ref{dcfghm}  and \eqref{unfds1} we can prove that the inequality
\begin{equation}
    \label{unfds2}
\rho^*(c,p) \leq \rho^*(x_1,x_2) 
\end{equation}
holds for all different $x_1, x_2 \in A_p$.
Let us denote by $A^0_p$ the set $(A \setminus \{c\}) \cup \{p\}$ and consider the ultrametric space $(A_p^0,\rho^0_{A_p})$
with
\[
\rho^*_{A_p^0} = \rho^* \big|_{A_p^0 \times A_p^0}.
\]
Since \eqref{unfds2} holds for all different $x_1, x_2 \in A_p$, the conditions of Corollary \ref{tbhjss} are satisfied for
\[
(X,d) = (A_p, \rho^*_{A_p}), \quad x_1 = p, \; x_2 = c.
\]
Consequently the spaces $(A, \rho^*_A)$ and $(A_p^0, \rho^*_{A_p^0})$ are isometric.

Thus, by condition $(ii)$, the space $(Y^*, \rho^*)$ contains no four-point subspaces which are weakly similar to $(X_4,d_4)$ or to $(Y_4,\rho_4)$.
Since $c$ is a limit point of the space $(Y^*, \rho^*)$, it is an {\bf US}-space by Theorem~\ref{eeerlm}.

Proposition \ref{piuu} implies that $(Y^*,\rho^*)$ is complete. Hence $(Y^*,\rho^*)$ also is a completion of the ultrametric space $(X,d)$. By Proposition~\ref{fvhjjm} the spaces $(Y^*,\rho^*)$ and $(Y,\rho)$ are isometric. Thus $(Y,\rho)$ is an ${\bf US}$-space as required. 

\end{proof}

\begin{lemma}\label{476}
Let an infinite discrete ultrametric space $(X,d)$ satisfy the following conditions:
\begin{itemize}[left=10pt]
    \item[(i)] $(X,d)$ is not metrically discrete.

        \item[(ii)]  $(X,d)$ contains no four-point subspace whose diametrical graph is isomorphic to the cycle $C_4$.

Then $(X,d)$  contains a Cauchy sequence $(x_n)_{n\in \mathbb N}$ of distinct points of $X$.
\end{itemize}

\end{lemma}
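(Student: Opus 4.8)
The plan is to first convert the hypotheses into the single analytic statement $\inf D_0(X)=0$, and then to construct the required Cauchy sequence by a recursive ball-shrinking procedure, spending hypothesis~(ii) precisely to force all sufficiently close pairs of points to be trapped inside one shrinking ball.

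First I would observe that, since $(X,d)$ is discrete and, by~(i), not metrically discrete, Proposition~\ref{qsswd} yields $\inf D_0(X)=0$; hence for each $\eta>0$ there are distinct $x,y\in X$ with $0<d(x,y)<\eta$. I would also record the elementary fact that a discrete ultrametric space with $\inf D_0=0$ is automatically infinite, since a finite distance set attains its positive minimum.

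The key step, and the one I expect to be the main obstacle, is the following concentration claim: if $a\neq b$ and $d(a,b)=\varepsilon$, then every pair of distinct points $x,y$ with $d(x,y)<\varepsilon$ satisfies $x,y\in \overline{B}$, where $\overline{B}:=\{z\in X:d(a,z)\le\varepsilon\}$. To prove it I would treat separately the case in which $\{x,y\}$ meets $\{a,b\}$, which is a direct computation with the strong triangle inequality, and the case in which $a,b,x,y$ are pairwise distinct. In the latter case I set $R:=\operatorname{diam}\{a,b,x,y\}$. If some cross distance, say $d(a,x)$, exceeds $\varepsilon$, then repeated use of the strong triangle inequality forces all four cross distances $d(a,x),d(a,y),d(b,x),d(b,y)$ to equal $R$, while $d(a,b),d(x,y)<R$; by Definition~\ref{d5.2} the diametrical graph of $\{a,b,x,y\}$ is then the complete bipartite graph on the parts $\{a,b\}$ and $\{x,y\}$, that is, the cycle $C_4$, contradicting~(ii). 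Hence every cross distance is $\le\varepsilon$, so $R=\varepsilon$ and all four points, in particular $x$ and $y$, lie in $\overline{B}$. This dichotomy is exactly where the no-$C_4$ hypothesis is consumed.

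Using this claim I would build a decreasing chain $X=X_1\supseteq X_2\supseteq\cdots$ of subspaces, each infinite, discrete, and inheriting~(ii) (the property is hereditary, since the diametrical graph of a four-point set is intrinsic to that set), with $\operatorname{diam}X_k\to 0$. Given $X_k$ satisfying $\inf D_0(X_k)=0$, I choose distinct $a_k,b_k\in X_k$ with $\varepsilon_k:=d(a_k,b_k)<1/k$ and set $X_{k+1}:=\{z\in X_k:d(z,a_k)\le\varepsilon_k\}$, so that $\operatorname{diam}X_{k+1}\le\varepsilon_k<1/k$. The concentration claim applied inside $X_k$ shows that every pair of $X_k$ at distance $<\varepsilon_k$ lies in $X_{k+1}$; since $\inf D_0(X_k)=0$ supplies such pairs of arbitrarily small distance, we obtain $\inf D_0(X_{k+1})=0$, so $X_{k+1}$ is again infinite and the recursion continues. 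Finally, choosing $x_k\in X_k\setminus\{x_1,\dots,x_{k-1}\}$ (possible because each $X_k$ is infinite) gives distinct points with $x_m\in X_n$ whenever $m\ge n$; therefore $d(x_n,x_{n+1})\le\operatorname{diam}X_n\to 0$, and by Proposition~\ref{prop:ultrametric-cauchy} the sequence $(x_n)_{n\in\mathbb N}$ is the desired Cauchy sequence of distinct points.
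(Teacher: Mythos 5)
Your proof is correct, and it reaches the conclusion by a genuinely different construction than the paper's. The paper takes the specific sequence $(x_n)$ supplied by Proposition~\ref{qsswd}(iii) (points whose punctured balls become nonempty just above radii $r_n\to 0$) and proves that this very sequence is already Cauchy: assuming a persistent gap $d(x_{n_m},x_{n_m+1})>c_1$, it finds companions $y^0,y^1$ in two disjoint small balls around $x^0=x_{n_{m_0}}$ and $x^1=x_{n_{m_0}+1}$ and shows that $\{x^0,y^0,x^1,y^1\}$ has diametrical graph $K_{2,2}\cong C_4$, contradicting (ii). You instead use only the analytic consequence $\inf D_0(X)=0$ together with your concentration claim --- two close pairs that are mutually far apart would produce exactly the same forbidden $K_{2,2}$ configuration, so every pair at distance less than $d(a,b)$ is trapped in the closed ball of radius $d(a,b)$ about $a$ --- and iterate it to obtain nested balls $X_k$ of diameter tending to $0$, each necessarily infinite because it still realizes arbitrarily small positive distances; picking distinct points $x_k\in X_k$ then gives the Cauchy sequence via Proposition~\ref{prop:ultrametric-cauchy}. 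The four-point configuration that consumes hypothesis (ii) is the same in both arguments, but your route buys two things: it bypasses the contradiction argument about a pre-chosen sequence, and it does not lean on the ball conditions of Proposition~\ref{qsswd}(iii), which as printed say $|B_{kr_n}(x_n)|\geq 1$ although the paper's proof actually needs $|B_{kr_n}(x_n)|\geq 2$ to extract the companions $y^0,y^1$. I verified the concentration claim: when $a,b,x,y$ are pairwise distinct with $d(a,b)=\varepsilon$, $d(x,y)<\varepsilon$, and some cross distance exceeding $\varepsilon$, the isosceles property does force all four cross distances to coincide and to strictly dominate $d(a,b)$ and $d(x,y)$, so the dichotomy is sound.
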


\begin{proof}
Since $(X,d)$ is not metrically discrete, Proposition~\ref{qsswd} implies that there are a sequence $(x_n)_{n\in\mathbb{N}}$ of distinct points of $X$ 
    and a strictly decreasing sequence $(r_n)_{n\in\mathbb{N}}$ of  positive real numbers 
    such that 
    \begin{equation}\label{st1}
    \lim\limits_{n\to\infty}r_n=0
        \end{equation}
    and
\begin{equation}
    \label{st2}
\left|B_{r_n}(x_n)\right|=1,\quad \left|B_{kr_n}(x_n)\right|\geq 1
    \end{equation}
    for each $n\in {\mathbb N}$ and every $k\in (1,\infty).$

We claim that $(x_n)_{n\in\mathbb{N}}$ is a Cauchy sequence in $(X,d)$.  
By Proposition~\ref{prop:ultrametric-cauchy} the sequence $(x_n)_{n\in\mathbb{N}}$ is a Cauchy sequence if and only if the limit relation
\begin{equation*}
\lim_{n\to\infty} d(x_n,x_{n+1})=0
\end{equation*}
holds.
The last equality is satisfied if and only if
\begin{equation}\label{xx2}
    \limsup_{n\to\infty} d(x_n, x_{n+1}) = 0.
\end{equation}
Let us assume that \eqref{xx2} is false.  
Then there exist $c_1>0$ and a subsequence $(x_{n_m})_{m\in\mathbb{N}}$ of the sequence $(x_n)_{n\in\mathbb{N}}$ such that
\begin{equation}
    \label{tvhukp}
      d(x_{n_m}, x_{n_m+1}) > c_1
\end{equation}
for every $m\in \mathbb N$.
Now using \eqref{st1} and \eqref{tvhukp} we can find $m_0\in \mathbb N$ such that 
\begin{equation}\label{cc1}
    r_{n_{m_0}+1}< r_{n_{m_0}}<c_1.
\end{equation}
Therefore, there exist $k\in (1,\infty)$ for which the double inequality
\begin{equation}
    \label{cc2}
kr_{n_{m_0}+1}<kr_{n_{m_0}}<c_1
\end{equation}
is satisfied.

For convenience, we introduce the following notation:
\begin{equation}
    \label{cc3}
    x^0 := x_{n_{m_0}}, \quad x^1 := x_{n_{m_0}+1}, \quad
    r^0 := kr_{n_{m_0}}, \quad r^1 := kr_{n_{m_0}+1}.
\end{equation}

Let us consider the open balls $B_{r^0}(x^0)$ and $B_{r^1}(x^1)$.  

Inequality \eqref{tvhukp} with $m=m_0$, double inequality \eqref{cc2}  with $x_m=x_{n_m} = x^0$, and Lemma~\ref{dcfghm} with 
    $B_1 = B_{r^0}(x^0),$ $ B_2 = B_{r^1}(x^1),$
 give us the equality
\begin{equation}
    \label{kkjuuh1}
    B_{r^0}(x^0) \cap B_{r^1}(x^1) = \varnothing.
\end{equation}
The second inequality in \eqref{st2} implies the existence of some points
$
y^0 \in B_{r^0}(x^0),$ and $ y^1 \in B_{r^1}(x^1)
$
which satisfy conditions
\begin{equation}
\label{kkhh4}
y_0 \neq x^0, \qquad y^1 \neq x^1. 
\end{equation}
Now using \eqref{kkjuuh1} and \eqref{kkhh4} we see that the point $x^0,$ $y^0,$ $x^1$ and $y^1$ are pairwise distinct.
Let us denote by $A$ the set $\{x^0, y^0, x^1, y^1\}$ and consider the ultrametric space $(A, d_A)$ with
\[
d_A := d\big|_{A \times A}.
\]

By Theorem \ref{thm:3.1} the diametrical graph $G_{A,d_A}$ is a complete multipartite graph.  
We claim that $G_{A,d_A}$ and $K_{2,2}$ are isomorphic.

To prove it we note that the equalities
\begin{equation}
    \label{865gny}
\operatorname{diam} A=d(x^0, x^1) = d(x^0, y^1) = d(y^0, x^1) = d(y^0, y^1)
\end{equation}
hold by Lemma \ref{dcfghm}.
Moreover, inequality \eqref{tvhukp} with $m=m_0$, double inequality~\eqref{cc1}, 
and equalities~\eqref{cc3} give us the inequalities
\begin{equation}
    \label{vnhgefk}
d(y^0,x^0) <\operatorname{diam} A
\end{equation}
and
\begin{equation}
\label{eftyi88}
d(y^1,x^1) <\operatorname{diam} A. 
\end{equation}
Now using \eqref{865gny}--\eqref{eftyi88} , we see that $G_{A,d_A}$ and $K_{2,2}$ are isomorphic.  

Hence $G_{A,d_A}$ and $C_4$ also are isomorphic by Corollary~\ref{renjh}.  
The last statement contradicts condition $(ii)$.  

Thus $(x_n)_{n \in \mathbb{N}}$ is a Cauchy sequence in $(X,d)$.

\end{proof}

The following theorem shows that Conjecture~\ref{sepjtg} is true.

\begin{theorem}\label{mnb}
Let $(X,d)$ be an infinite ultrametric space. The following statements are equivalent:
\begin{itemize}[left=10pt]
\item[(i)] There exists an ultrametric space $(Y,\rho)\in {\bf US}$ such that $(X,d)$ is isometric to a subspace of $(Y,\rho)$.
\item[(ii)] $(X,d)$ contains no four-point subspace whose diametrical graph is isomorphic to the cycle $C_4$.

\item[(iii)] $(X,d)$ contains no four-point subspace which is weakly similar to $(X_4,d_4)$ or to $(Y_4,\rho_4)$.

\end{itemize}
\end{theorem}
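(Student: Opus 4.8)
The plan is to dispose first of the two ``easy'' parts and then concentrate on constructing the embedding in $(iii)\Rightarrow(i)$. Since Lemma~\ref{pidscy} identifies, for a four-point ultrametric space, weak similarity to $(X_4,d_4)$ or $(Y_4,\rho_4)$ with having diametrical graph isomorphic to $C_4$, the equivalence $(ii)\Leftrightarrow(iii)$ is immediate and I may use whichever formulation is convenient. For $(i)\Rightarrow(iii)$ I would argue by contradiction: a four-point subspace $A\subseteq X$ weakly similar to $(X_4,d_4)$ or $(Y_4,\rho_4)$ would be isometric to a four-point subspace of the ambient ${\bf US}$-space, hence itself an ${\bf US}$-space by Theorem~\ref{8866gh}; but Theorem~\ref{eeerlm}, applied to the finite space $A$, forbids a four-point ${\bf US}$-space from being weakly similar to $(X_4,d_4)$ or $(Y_4,\rho_4)$, a contradiction.

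For $(iii)\Rightarrow(i)$ I would split according to the behaviour of $\inf D_0(X)$, treating first the non--metrically-discrete case $\inf D_0(X)=0$. If $(X,d)$ has a limit point, then Theorem~\ref{eeerlm} gives $(X,d)\in{\bf US}$ at once. If not, $(X,d)$ is discrete and, by Proposition~\ref{qsswd}, not metrically discrete; Lemma~\ref{476} then yields a Cauchy sequence of distinct points, whose limit is a limit point of the completion $(Y,\rho)$, so Lemma~\ref{gfjhkg} makes $(Y,\rho)$ an ${\bf US}$-space, into which $(X,d)$ embeds by Definition~\ref{ssmmcc}.

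It remains to treat the metrically discrete case $\inf D_0(X)=m>0$, which I expect to be the genuine obstacle. If the value $m$ is attained, Proposition~\ref{jhfdc} produces the two points required by Lemma~\ref{fint}, and that lemma gives $(X,d)\in{\bf US}$ directly. The hard subcase is $m$ unattained, so that $d(x,y)>m$ for all distinct $x,y$ while no pair realizes $m$. Here I would modify the metric to $d^*:=\max\{d-m,0\}$. Post-composing an ultrametric with a nondecreasing function that vanishes at $0$ preserves the strong triangle inequality, and the metric axiom separating points survives precisely because every positive distance strictly exceeds $m$; thus $(X,d^*)$ is again an ultrametric space with $\inf D_0(X,d^*)=0$. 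It is weakly similar to $(X,d)$ via $u\mapsto u+m$, so by Proposition~\ref{prop:3.4} it inherits property $(ii)$, and the previous paragraph embeds $(X,d^*)$ isometrically into some ${\bf US}$-space $(Y^*,\rho^*)$.

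Finally I would transport the embedding back. Setting $g(u):=u+m$ for $u>0$ and $g(0):=0$ and $\rho:=g\circ\rho^*$ gives an ultrametric on $Y^*$, and $(Y^*,\rho)$ is still an ${\bf US}$-space: a center $x_0$ of $(Y^*,\rho^*)$ furnished by Theorem~\ref{[2.1]} satisfies $\rho^*(x_0,x)\le\rho^*(y,x)$, an inequality preserved by the nondecreasing $g$, so $x_0$ remains a center for $\rho$. The original embedding $\Phi^*$ then works verbatim, since for distinct $x,y$ one has $\rho(\Phi^*x,\Phi^*y)=g(d^*(x,y))=(d(x,y)-m)+m=d(x,y)$; hence $(X,d)$ embeds into the ${\bf US}$-space $(Y^*,\rho)$, finishing $(iii)\Rightarrow(i)$. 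The crux is exactly this unattained-minimum subcase: shrinking the whole metric by the unattained $m$ is what manufactures the Cauchy sequence needed to enter the regime of the earlier lemmas, and the care lies in re-adding $m$ to the completion so as to recover the true distances while keeping the ${\bf US}$-property intact.
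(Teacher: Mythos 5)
Your proposal is correct and follows essentially the same route as the paper: the same three-way split on $\Delta=\inf D_0(X)$ (zero, attained positive, unattained positive), the same use of Lemmas~\ref{fint}, \ref{476} and \ref{gfjhkg} in the first two regimes, and in the unattained case the same shift $d\mapsto d-\Delta$ followed by completion and the inverse shift $\rho^0\mapsto\rho^0+\Delta$, with the ${\bf US}$-property of the shifted completion checked via the center criterion of Theorem~\ref{[2.1]}. The only cosmetic difference is that you justify preservation of condition $(ii)$ under the shift by weak similarity and Proposition~\ref{prop:3.4}, where the paper directly observes that the identity map is an isomorphism of the relevant diametrical graphs.
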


\begin{proof}

The logical equivalence $(ii) \Leftrightarrow (iii)$ is valid by Lemma~\ref{piuu}.  
Theorems \ref{eeerlm} and \ref{8866gh} imply the validity of the implication $(i) \Rightarrow (ii)$.  

Thus to prove the theorem under consideration we must show that statement $(i)$ is true if statements $(ii)$ and $(iii)$ are true.  
Let us do it.

Suppose that statements $(ii)$ and $(ii)$ are valid.  Let us denote by $\Delta$ the infimum of the set 
\begin{equation}
    \label{ety1}
    D_0(X) := \{ d(x,y) : x,y \in X, \; x \neq y \}, 
\end{equation}
\begin{equation}
\label{ety2}
    \Delta:=\inf D_0(X).
\end{equation}
The following three cases are posible:
\begin{itemize}[left=10pt]
    \item[$(i_1)$] $\Delta=0$,

       \item[$(i_2)$] $\Delta>0$ and $\Delta \in D_0(X)$,

   \item[$(i_3)$] $\Delta>0$ and $\Delta \notin D_0(X)$.
    
\end{itemize}

Below we will prove the truth of the implication $((ii)\&(iii))\Rightarrow (i)$ in each of the cases $(i_1)$--$(i_3)$ separately.

$(i_1)$ Let $(i_1)$ hold.

If $(X,d)$ has a limit point, then $(X,d)$ is an $\bf US$-space by Theorem~\ref{eeerlm}. Hence statement $(i)$ holds with 
\((Y,\rho) = (X,d)\). If \((X,d)\) contains no limit points, then, using 
Definition~\ref{alfth} and Proposition~\ref{qsswd}, 
we can prove that \((X,d)\) is discrete 
but not metrically discrete.
Consequently $(X,d)$ contains a Cauchy sequence $(x_{n})_{n\in\mathbb{N}}$ of distinct points of $X$ by Lemma~\ref{476}.  
Let $(Y,\rho)$ be the completion of  $(X,d)$ and let $\Phi\colon X\to Y$ be an isometric embedding of $(X,d)$ in $(Y,\rho)$.  
Then the sequence $(\Phi(x_{n}))_{n\in\mathbb{N}}$ is a Cauchy sequence in $(Y,\rho)$.  
Since $(Y,\rho)$ is complete, the sequence $(\Phi(x_{n}))_{n\in\mathbb{N}}$ converges to a point $c\in Y$.  All points of the sequence $(\Phi(x_{n}))_{n\in\mathbb{N}}$  are pairwise distinct because the points of 
$(x_{n})_{n\in\mathbb{N}}$ are pairwise distinct.
Consequently the point $c$ is a limit point of $Y$.  Now using Lemma~\ref{gfjhkg} we obtain 
$
(Y,\rho)\in {\bf US}.
$
Thus  $\Phi:X\to Y$ is the desired isometric embedding of $(X,d)$ into some {\bf US}-space.

$(i_2)$ Let $(i_2)$ hold. Then \((X,d)\) 
contains two distinct points 
 $x_0$ and $y_0$ such that
\begin{equation}
    \label{qq1}
d(x_0,y_0) \leq d(y_1,y_2)
\end{equation}
 for all distinct $y_1,\,y_2\in X$. Now, using Lemma~\ref{fint}, we see that  $(X,d)$ is an $\bf US$-space. Thus statement $(i)$ hold with $(Y,\rho)= (X,d)$.

To complete the proof we need to consider  case  $(i_3)$.

$(i_3)$ Let $(i_3)$ hold. Then the mapping $d^0:X\times X\to\mathbb{R}^+$ defined as 
\begin{equation}
    \label{kijhw}
d^0(x,y)=
\begin{cases}
d(x,y)-\Delta,& {\rm if} \quad x\neq y,\\[4pt]
0,& {\rm otherwise}
\end{cases}
\end{equation}
is an 
 ultrametric on $X$.

Indeed, using $(i_3)$ and \eqref{kijhw} it is easy to see that
\[
d^{0}(x,y) > 0 \quad \text{and} \quad d^{0}(x,y) = d^{0}(y,x)
\]
hold for all distinct $x,y \in X$.  The strong triangle inequality 
\begin{equation}
    \label{aahnh}
    d^0(x,y)\leq \max \{d^0(x,z),d^0(z,y)\}
\end{equation}
directly follows from \eqref{kijhw} if $x=y$. Let $x$ and $y$ be distinct.

Since $d$ is an ultrametric on $X$, we have the strong triangle inequality
\begin{equation}
    \label{trre}
d(x,y) \leq \max\{ d(x,z),\, d(z,y) \}
\end{equation}
for all $x,y,z \in X$. The last inequality is equivalent to
\begin{equation}\label{dcftyh}
d(x,y) - \Delta \leq \max\{ d(x,z),\, d(z,y) \} - \Delta. 
\end{equation}
The relations  
\[
\max \{ d(x,z),\, d(z,y)\} - \Delta 
= \max \{ d(x,z) - \Delta,\, d(z,y) - \Delta \}
\]
\[
\leq\max \{ d^0(x,z),\, d^0(z,y) \}
\]
and \eqref{trre}, \eqref{dcftyh} give us \eqref{aahnh}.
Thus $(X,d^0)$ is an ultrametric space as required.

It directly follows from \eqref{ety1}, \eqref{ety2}, and \eqref{kijhw} that
\begin{equation}
    \label{w1}
\inf \{ d^0(x,y) : x,y \in X,\, x \neq y \} = 0. 
\end{equation}

 Definition~\ref{alfth} and $(i_3)$ imply that $(X,d)$ is a discrete ultrametric space. Since $(X,d)$ is a discrete ultrametric space, $(i_3)$ and \eqref{kijhw}  imply that 
the ultrametric $(X,d^0)$ also is discrete.
Now using \eqref{w1} and Proposition \ref{qsswd} we see that the discrete space $(X,d^{0})$ is not metrically discrete.  

Let $A$ be an arbitrary subset of $X$ with $|A|=4$, and let $G_{A,d_A}$ and $G_{A,d^{0}_A}$ be the diametrical graphs of the ultrametric spaces $(A, d\big|_{A\times A})$ and $(A, d^{0}\big|_{A\times A}),$ respectively.  
Using \eqref{kijhw} it is easy to see that the identical mapping 
\(
\mathrm{Id}_A : A \to A, \quad \mathrm{Id}_A(a) = a
\)
for each $a \in A$, is an isomorphism of the diametrical graphs $G_{A,d_A}$ and $G_{A,d^{0}_A}$.
Hence, by statement $(ii)$, \((X,d^0)\) 
contains no four-point subspace whose diametrical graph is isomorphic to the cycle $C_4$. 
The last statement, $(iii)$ and Lemma~\ref{pidscy} imply that 
\((X,d^0)\) contains no four-point subspace which is weakly similar to $(X_4,d_4)$ or to $(Y_4,\rho_4)$.
Now, arguing as in case $(i_1)$, we can prove the existence of 
an {\bf US}-space \((Y,\rho^0)\) such that \((X,d^0)\) is isometrically embeddable in \((Y,\rho^0)\).

Let \(\Phi^0 : X \to Y\) be an isometric embedding of \((X,d^0)\) in \((Y,\rho^0)\) and let the mapping 
$
\rho : Y \times Y \to \mathbb{R} 
$
be defined as
\begin{equation}
    \label{wsknb}
\rho(x,y) := 
\begin{cases}
\rho^0(x,y)+\Delta, & {\rm if}\quad x\neq y,\\
0, & {\rm otherwise}.
\end{cases}
\end{equation}
As in the case of transition \(d \to d^0\), it is easy to verify that \(\rho : Y \times Y \to \mathbb{R}^+\) 
is an ultrametric on \(Y\). Since $(Y,\rho^0)$ is an ${\bf US}$-space, Theorem \ref{[2.1]} and \eqref{wsknb} imply the membership relation $(Y,\rho)\in {\bf US}$. Moreover, the restriction of the ultrametric \(\rho \) on the set \(\Phi^0(X)\times \Phi^0(X)\), where 
\[
\Phi^0(X) := \{\Phi^0(x), \, x \in X\},
\]
gives us an ultrametric space that is isometric to the space \((X,d)\).
Thus the mapping
\[
(X,d) \xrightarrow{\operatorname{Id}_X} (X,d^0) 
\xrightarrow{\Phi^0} (Y,\rho^0) 
\xrightarrow{\operatorname{Id}_Y} (Y,\rho),
\]
where \(\operatorname{Id}_X\) and \(\operatorname{Id}_Y\) are, respectively, the identical mappings on the sets
\(X\) and \(Y\), 
is the desired isometric embedding of \((X,d)\) into {\bf US}-space $(Y,\rho)$.

Thus statement $(i)$ holds in all cases. 
The proof is completed.

\end{proof}

Theorem \ref{eeerlm}, Theorem \ref{8866gh} and Theorem \ref{mnb} give us the next result.

\begin{corollary}
    \label{eehgg}
Let $(X,d)$ be an infinite ultrametric space. 
Then $(X,d)$ can be isometrically embedded in an {\bf US}-space
if and only if every four-point subspace of $(X,d)$ can be
isometrically embedded in an ${\bf US}$-space.
\end{corollary}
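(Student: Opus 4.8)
The plan is to reduce the global embeddability statement to the local four-point condition that already appears in Theorem~\ref{mnb}, and then to pin down exactly which four-point ultrametric spaces embed into ${\bf US}$-spaces.

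First I would dispose of the direct implication, which needs none of the three cited theorems. If $\Phi\colon X\to Y$ is an isometric embedding of $(X,d)$ into a space $(Y,\rho)\in{\bf US}$, then for every four-point subset $A\subseteq X$ the restriction $\Phi|_A$ is an isometric embedding of $(A,d|_{A\times A})$ into the same ${\bf US}$-space $(Y,\rho)$. Hence every four-point subspace of $(X,d)$ embeds in an ${\bf US}$-space.

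The substance lies in the converse, for which I would first establish the following pointwise fact: a four-point ultrametric space $(A,d_A)$ embeds isometrically in some ${\bf US}$-space if and only if $(A,d_A)$ is not weakly similar to $(X_4,d_4)$ or to $(Y_4,\rho_4)$. If $(A,d_A)$ is weakly similar to neither, then Theorem~\ref{eeerlm} applied to the finite space $A$ shows that $(A,d_A)$ is itself an ${\bf US}$-space and so embeds in itself. Conversely, suppose $(A,d_A)$ is weakly similar to $(X_4,d_4)$ or to $(Y_4,\rho_4)$ but still admits an isometric embedding into some $(Z,\sigma)\in{\bf US}$. Its image is a finite nonempty subset of $Z$, so by Theorem~\ref{8866gh} that image is an ${\bf US}$-space, whence $(A,d_A)$ itself is an ${\bf US}$-space. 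But then Theorem~\ref{eeerlm} applied to $A$ asserts that $A$ contains no four-point subspace weakly similar to $(X_4,d_4)$ or to $(Y_4,\rho_4)$, contradicting the fact that $A$ is exactly such a subspace of itself.

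With this fact available, I would close the argument by chaining equivalences. By the equivalence $(i)\Leftrightarrow(iii)$ of Theorem~\ref{mnb}, the space $(X,d)$ embeds in an ${\bf US}$-space if and only if it contains no four-point subspace weakly similar to $(X_4,d_4)$ or to $(Y_4,\rho_4)$; by the pointwise fact above, the latter holds precisely when every four-point subspace of $(X,d)$ embeds in an ${\bf US}$-space. The step I expect to demand the most care is the reverse direction of the pointwise fact: one must exclude the possibility that a four-point space weakly similar to $(X_4,d_4)$ or to $(Y_4,\rho_4)$ nevertheless slips into some larger ${\bf US}$-space, and it is exactly the heredity of the ${\bf US}$ property to finite subspaces, Theorem~\ref{8866gh}, that forecloses this and makes the equivalence work.
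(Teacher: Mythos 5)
Your proposal is correct and follows exactly the route the paper intends: the paper gives no written proof but cites precisely Theorems~\ref{eeerlm}, \ref{8866gh} and \ref{mnb}, and your argument (forward direction by restriction, the pointwise characterization of embeddable four-point spaces via Theorems~\ref{eeerlm} and \ref{8866gh}, then the equivalence $(i)\Leftrightarrow(iii)$ of Theorem~\ref{mnb}) is the intended assembly of those three results. The only step left tacit is that the ${\bf US}$ property is an isometry invariant (so that the image of $A$ being an ${\bf US}$-space makes $A$ one), which is immediate from the metric characterization in Theorem~\ref{[2.1]}.
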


\section{Conclusion. Expected results}

Theorem \ref{thm:3.1} and Theorem \ref{mnb}  imply that an infinite ultrametric space $(X,d)$ 
admits an isometric embedding in an ${\bf US}$-space 
if and only if for each four-point $A \subseteq X,$ the diametrical 
graph $G_{A,d_A}$ is isomorphic to one of the graphs 
$K_{1,1,1},$ $K_{1,1,2}$ or $K_{1,3}.$
If $G_{A,d_A} $ and $K_{1,1,1}$ are isomorphic 
for every four-point $A \subseteq X,$ then it is easy to prove that 
$(X,d)$ is equidistant, i.e. there is $t>0$ such that
\[
d(x,y) = t 
\]
for all distinct $ x,y \in X.$

Thus we obtain the next simple proposition.

\begin{proposition}
\label{gjjt}
Let $(X,d)$ be an infinite  ultrametric space.  
Then the following statements are equivalent:

\begin{itemize}[left=10pt]

    \item[(i)] $(X,d)$ is equidistant.
    \item[(ii)] The diametrical graph of every four-point subspace of $(X,d)$ is isomorphic to $K_{1,1,1,1}$.
\end{itemize}
\end{proposition}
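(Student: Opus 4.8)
The plan is to reduce the statement to a single elementary observation about diametrical graphs of four-point sets and then argue the nontrivial direction by contraposition, using only the infiniteness of $X$ together with the definition of the distance set. First I would record the following reformulation: by Definition~\ref{d5.2}, the diametrical graph $G_{A,d_A}$ of a four-point set $A$ is isomorphic to $K_{1,1,1,1}$, i.e.\ to the complete graph on four vertices, if and only if \emph{every} pair of distinct points of $A$ is at distance $\operatorname{diam} A$. Since $\operatorname{diam} A$ is the largest pairwise distance in $A$, this holds precisely when all six pairwise distances inside $A$ coincide, i.e.\ when $(A,d_A)$ is equidistant. With this reformulation, statement $(ii)$ simply asserts that every four-point subspace of $(X,d)$ is equidistant.

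The implication $(i)\Rightarrow(ii)$ is then immediate: if $(X,d)$ is equidistant with common value $t$, then for each four-point $A$ all pairwise distances equal $t=\operatorname{diam} A$, so $G_{A,d_A}\cong K_{1,1,1,1}$ by the observation above.

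For $(ii)\Rightarrow(i)$ I would argue by contraposition. Suppose $(X,d)$ is not equidistant. Then the distance set $D_0(X)$ from \eqref{smwuu} contains two distinct values $s\neq t$, witnessed by distinct points $p_1,p_2$ with $d(p_1,p_2)=s$ and distinct points $q_1,q_2$ with $d(q_1,q_2)=t$. Put $S=\{p_1,p_2,q_1,q_2\}$. The case $|S|=2$ is impossible, as it would force the two pairs to coincide and hence $s=t$. If $|S|=4$, then $S$ itself is a four-point subset carrying two different pairwise distances. If $|S|=3$ (the two witnessing pairs share exactly one point), I use that $X$ is infinite to choose a fourth point $r\in X\setminus S$ and pass to $S\cup\{r\}$. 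In either case I obtain a four-point subset $A\subseteq X$ in which the pairs $\{p_1,p_2\}$ and $\{q_1,q_2\}$ realize the unequal values $s$ and $t$; hence $(A,d_A)$ is not equidistant, so $G_{A,d_A}\not\cong K_{1,1,1,1}$ by the opening observation. This contradicts $(ii)$ and completes the contrapositive.

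The only step requiring any care — and the closest thing to an obstacle, though it is mild — is the bookkeeping when the two witnessing pairs are not disjoint: there the hypothesis that $X$ is infinite is exactly what allows me to pad $S$ up to four points while preserving the two distinct distances. Beyond that, the strong triangle inequality plays no essential role past what is already encoded in $\operatorname{diam} A$, so the argument is genuinely about the distance set and the infiniteness of $X$.
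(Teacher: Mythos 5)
Your proof is correct and is exactly the elementary argument the paper has in mind: the paper gives no written proof of Proposition~\ref{gjjt}, merely remarking beforehand that ``it is easy to prove'' that completeness of every four-point diametrical graph forces equidistance, and your reformulation (complete diametrical graph $\Leftrightarrow$ equidistant four-point subspace) plus the padding of two witnessing pairs to a common four-point set supplies precisely that missing routine verification. As you note, the strong triangle inequality is never used, so the statement in fact holds for any metric space with $|X|\geq 4$; this is consistent with, and slightly sharper than, what the paper asserts.
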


Let us now turn to graph $K_{1,1,2}$.

\begin{example}
    
\end{example}

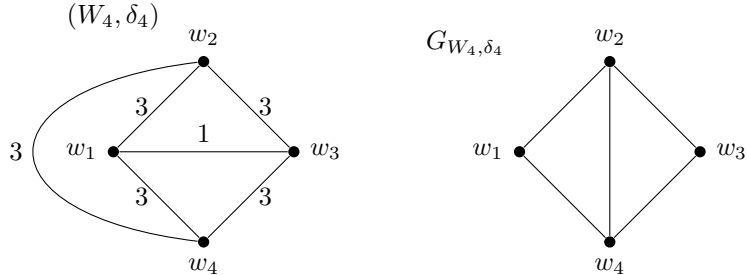
\begin{figure}[ht!]
\centering
\begin{tikzpicture}[remember picture, scale=1.2, every node/.style={font=\small}]

  \begin{scope}[xshift=-3.5cm]
    \node at (-1.0,1.5) {$(W_4,\delta_4)$};
    \node[circle,fill,inner sep=1.5pt,label=above:$w_2$] (x2) at (0,1) {};
    \node[circle,fill,inner sep=1.5pt,label=left:$w_1$]  (x1) at (-1,0) {};
    \node[circle,fill,inner sep=1.5pt,label=right:$w_3$] (x3) at (1,0) {};
    \node[circle,fill,inner sep=1.5pt,label=below:$w_4$] (x4) at (0,-1) {};
    \draw (x1) -- node[left] {$3$} (x2);
    \draw (x2) -- node[right,yshift=-0pt] {$3$} (x3);
    \draw (x3) -- node[right] {$3$} (x4);
    \draw (x4) -- node[left] {$3$} (x1);
    \draw (x1) -- node[above] {$1$} (x3);
    \draw (x4) .. controls (-2.5,-0.7) and (-2.5,0.7) .. node[left] {$3$} (x2);
  \end{scope}

\begin{scope}[xshift=1.0cm]
    \node at (-1.6,1.2) {$G_{W_4,\delta_4}$};
    \node[circle,fill,inner sep=1.5pt,label=above:$w_2$] (y2) at (0,1) {};
    \node[circle,fill,inner sep=1.5pt,label=left:$w_1$]  (y1) at (-1,0) {};
    \node[circle,fill,inner sep=1.5pt,label=right:$w_3$] (y3) at (1,0) {};
    \node[circle,fill,inner sep=1.5pt,label=below:$w_4$] (y4) at (0,-1) {};
    \draw (y1) -- (y2) -- (y3) -- (y4) -- (y1);
    \draw (y2) -- (y4); %
\end{scope}

\end{tikzpicture}
\caption{The four-point ultrametric spaces $(W_4,\delta_4)$ and its diametrical graph $G_{W_4,\delta_4}$.}
\label{cis5}
\end{figure}

We hope that the following conjecture is true.

\begin{conjecture}
Let $(X,d)$ be an   ultrametric space with $|X|\geq 4$. Then the following statements are equivalent:

\begin{itemize}[left=10pt]
    \item[(i)] The diametrical graph of each four-point subspace of $(X,d)$ is isomorphic to $K_{1,1,2}$.
    \item[(ii)] Each four-point subspace of $(X,d)$ is weakly isometric to $(W_4,\delta_4)$.
    \item[(iii)] The space $(X,d)$ is weakly isometric to $(W_4,\delta_4)$.
\end{itemize}

\end{conjecture}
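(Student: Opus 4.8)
The plan is to establish the cycle of implications $(iii) \Rightarrow (ii) \Rightarrow (i) \Rightarrow (iii)$, where the first two steps are short and the entire difficulty is concentrated in $(i)\Rightarrow(iii)$. Throughout I read ``weakly isometric'' as ``weakly similar'' in the sense of Definition~\ref{scguite}. For $(iii)\Rightarrow(ii)$, recall that a weak similarity is a bijection, so if $(X,d)$ is weakly isometric to the four-point space $(W_4,\delta_4)$ then $|X|=4$, whence the only four-point subspace of $(X,d)$ is $(X,d)$ itself, which is weakly isometric to $(W_4,\delta_4)$ by assumption. For $(ii)\Rightarrow(i)$, I would first record that the diametrical graph $G_{W_4,\delta_4}$ is isomorphic to $K_{1,1,2}$, which is precisely what Figure~\ref{cis5} displays. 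Then, given any four-point subspace $A$ of $(X,d)$, statement $(ii)$ supplies a weak similarity of $A$ and $(W_4,\delta_4)$, and Proposition~\ref{prop:3.4} yields $G_{A}\cong G_{W_4,\delta_4}\cong K_{1,1,2}$, which is $(i)$.

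The core is $(i)\Rightarrow(iii)$, which I would split into two parts. First I would show that $(i)$ forces $|X|=4$. Suppose for contradiction that $X$ contains five distinct points, and let $M$ be the largest distance among them. By the strong triangle inequality the relation ``$d(x,y)<M$'' is an equivalence relation on these five points, partitioning them into classes so that two of the points lie in distinct classes if and only if their distance equals $M$. Whenever a four-element subset $A$ of the five points meets at least two classes, its diameter equals $M$, and its diametrical graph $G_A$ is exactly the complete multipartite graph whose parts are the nonempty intersections of $A$ with the classes: within $A$, a pair is at the diameter $M$ precisely when it is a cross-class pair.

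Using hypothesis $(i)$ — that every such $G_A$ must be isomorphic to $K_{1,1,2}$ — together with Proposition~\ref{zbgqee} (distinct parameter lists give non-isomorphic complete multipartite graphs), I would rule out the forbidden induced configurations. A class of three points together with a point from another class produces $K_{1,3}$; two classes each of size at least two produce $K_{2,2}$; and four distinct classes (one point apiece) produce $K_{1,1,1,1}$; none of these is $K_{1,1,2}$. Hence every class has at most two points, at most one class has two points, and there are at most three classes, so the five points account for at most $2+1+1=4$ points, a contradiction. Thus $|X|\le 4$, and since $|X|\ge 4$ we obtain $|X|=4$. It then remains to produce the weak similarity: with $|X|=4$, statement $(i)$ says $G_{X,d}\cong K_{1,1,2}$, so exactly one pair $\{p,q\}$ is non-adjacent, giving $0<m:=d(p,q)<M:=\operatorname{diam} X$ while the remaining five pairs are at distance $M$, so $D(X)=\{0,m,M\}$. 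Mapping $p,q$ to the sub-diameter pair $w_1,w_3$ of $(W_4,\delta_4)$ and the other two points to $w_2,w_4$, and taking the strictly increasing bijection $f\colon D(W_4)\to D(X)$ with $f(0)=0$, $f(1)=m$, $f(3)=M$, a direct check against the distances of $(W_4,\delta_4)$ shows that equality~\eqref{joi83} holds, so this map is the desired weak similarity and $(iii)$ follows.

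The main obstacle is the first part of $(i)\Rightarrow(iii)$: the combinatorial argument that $(i)$ caps the cardinality at four. The delicate point is that a four-point subset may have diameter strictly smaller than the five-point diameter $M$, so one must restrict attention to four-point subsets that genuinely meet two classes before identifying $G_A$ with the induced multipartite graph; once this caveat is handled, the case analysis on class sizes is routine.
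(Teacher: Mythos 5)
The statement you were given is not proved in the paper at all: it appears in the concluding section as an open problem (``We hope that the following conjecture is true''), so there is no in-paper proof to compare against, and a correct argument here goes strictly beyond the paper. Having checked your proposal step by step, I find it correct, granting your sensible (and evidently intended) reading of ``weakly isometric'' as ``weakly similar'' in the sense of Definition~\ref{scguite} --- a term the paper never defines. The easy implications are handled properly: bijectivity of weak similarities forces $|X|=4$ in $(iii)\Rightarrow(ii)$, and $(ii)\Rightarrow(i)$ follows from $G_{W_4,\delta_4}\cong K_{1,1,2}$ (visible in Figure~\ref{cis5}) together with Proposition~\ref{prop:3.4}. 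The crux, $(i)\Rightarrow(iii)$, is also sound, and its cardinality cap rests on the same mechanism that underlies Theorem~\ref{thm:3.1}: for five distinct points with maximal mutual distance $M$, the relation $d(x,y)<M$ is an equivalence by the strong triangle inequality, cross-class pairs realize $M$ exactly, and every four-point subset meeting at least two classes has diametrical graph equal to the complete multipartite graph on the induced parts (your closing caveat, restricting to subsets that meet two classes, is exactly the right precaution). Excluding $K_{1,3}$, $K_{2,2}$ and $K_{1,1,1,1}$ via hypothesis $(i)$ and Proposition~\ref{zbgqee} then gives: all classes of size at most two, at most one class of size two, at most three classes, hence at most $2+1+1=4$ points --- a contradiction with five. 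I verified your case analysis is exhaustive over the size profiles of five points: $(5)$ is impossible since $M$ is attained; $(4,1)$, $(3,2)$, $(3,1,1)$ yield $K_{1,3}$; $(2,2,1)$ yields $K_{2,2}$; $(2,1,1,1)$ and $(1,1,1,1,1)$ yield $K_{1,1,1,1}$. The endgame is likewise correct: with $|X|=4$, statement $(i)$ applied to $X$ itself gives $G_{X,d}\cong K_{1,1,2}$, so exactly one pair $\{p,q\}$ is sub-diametral, $D(X)=\{0,m,M\}$ with $0<m<M$, and your map $p\mapsto w_1$, $q\mapsto w_3$ with $f(0)=0$, $f(1)=m$, $f(3)=M$ satisfies \eqref{joi83} for all pairs, including $x=y$ (which is what forces $f(0)=0$, and your $f$ provides it).

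One presentational suggestion: state explicitly, as a separate claim, that statement $(i)$ by itself forces $|X|=4$. This is the real content of your argument, and it explains why statement $(iii)$ --- which concerns the whole space --- is compatible with the a priori arbitrary hypothesis $|X|\geq 4$. It is also worth noting that your forbidden-configuration technique is in the spirit of the paper's own methods (compare the exclusion of $K_{2,2}$ inside the proof of Lemma~\ref{476}), so the argument would splice naturally into the paper, upgrading the conjecture to a proposition.
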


Let us consider now two examples of ${\bf US}$-spaces whose four-point subspaces have diametrical graph isomorphic to $K_{1,3}$.

\begin{example}
 \label{fgjkdfb}
Let us define an ultrametric $d^+\colon \mathbb{R}^+ \times \mathbb{R}^+ \to \mathbb{R}^+$ as
\begin{equation}\label{reew}
d^+(p, q) =
\begin{cases}
0, & \text{if } \quad p = q, \\
\max \{p, q\}, & \text{if }\quad  p \neq q.
\end{cases}
\end{equation}
In \cite{DR2025USGbLSG} it was noted that $({\mathbb R}^+,d^+)\in {\bf US}$. Moreover using \eqref{reew} it is easy to see that the diametrical graphs of all four-point subspaces of $(\mathbb R^+,d^+)$ are isomorphic to $K_{1,3}.$

\end{example}

\begin{example}
    See Figure \ref{cis6} below.
\end{example}

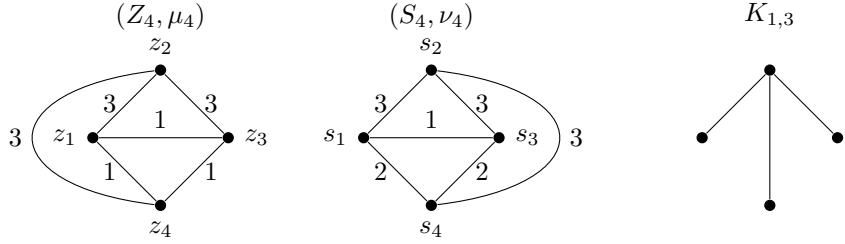
\begin{figure}[ht!]
\centering
\begin{tikzpicture}[remember picture, scale=0.9, every node/.style={font=\small}]

  \begin{scope}[xshift=-7cm]
    \node at (0,1.8) {$(Z_4,\mu_4)$};
    \node[circle,fill,inner sep=1.5pt,label=above:$z_2$] (x2) at (0,1) {};
    \node[circle,fill,inner sep=1.5pt,label=left:$z_1$]  (x1) at (-1,0) {};
    \node[circle,fill,inner sep=1.5pt,label=right:$z_3$] (x3) at (1,0) {};
    \node[circle,fill,inner sep=1.5pt,label=below:$z_4$] (x4) at (0,-1) {};
    \draw (x1) -- node[left] {$3$} (x2);
    \draw (x2) -- node[right] {$3$} (x3);
    \draw (x3) -- node[right] {$1$} (x4);
    \draw (x4) -- node[left] {$1$} (x1);
    \draw (x1) -- node[above] {$1$} (x3);
    \draw (x4) .. controls (-2.5,-0.7) and (-2.5,0.7) .. node[left] {$3$} (x2);
  \end{scope}

  \begin{scope}[xshift=-3cm]
    \node at (0,1.8) {$(S_4,\nu_4)$};
    \node[circle,fill,inner sep=1.5pt,label=above:$s_2$] (y2) at (0,1) {};
    \node[circle,fill,inner sep=1.5pt,label=left:$s_1$]  (y1) at (-1,0) {};
    \node[circle,fill,inner sep=1.5pt,label=right:$s_3$] (y3) at (1,0) {};
    \node[circle,fill,inner sep=1.5pt,label=below:$s_4$] (y4) at (0,-1) {};
    \draw (y1) -- node[left] {$3$} (y2);
    \draw (y2) -- node[right] {$3$} (y3);
    \draw (y3) -- node[right] {$2$} (y4);
    \draw (y4) -- node[left] {$2$} (y1);
    \draw (y1) -- node[above] {$1$} (y3);
    \draw (y2) .. controls (2.5,0.7) and (2.5,-0.7) .. node[right] {$3$} (y4);
  \end{scope}

  \begin{scope}[xshift=2cm]
    \node at (0,1.8) {$K_{1,3}$};
    \node[circle,fill,inner sep=1.5pt,label=above:] (z2) at (0,1) {};
    \node[circle,fill,inner sep=1.5pt,label=left:]  (z1) at (-1,0) {};
    \node[circle,fill,inner sep=1.5pt,label=right:] (z3) at (1,0) {};
    \node[circle,fill,inner sep=1.5pt,label=below:] (z4) at (0,-1) {};
    \draw (z1) -- (z2) -- (z3);
    \draw (z2) -- (z4); 
  \end{scope}

\end{tikzpicture}
\caption{The diametrical graphs of the four-point ultrametric spaces $(S_4,\nu_4)$ and $(Z_4,\mu_4)$ are isomorphic to $K_{1,3}$.}
\label{cis6}
\end{figure}

The following hypothesis is very plausible.

\begin{conjecture}
    \label{gdgnij}
    Let $(X,d)$ be an   ultrametric space with $|X|\geq 4$. Then the following statements are equivalent:

\begin{itemize}[left=10pt]
    \item[(i)] The diametrical graph of each four-point subspace of $(X,d)$ is isomorphic to $K_{1,3}$, but
$(X,d)$ contains no four-point subspaces which are weakly similar to $(Z_4,\mu_4)$.

\item[(ii)] All four-point subspace of $(X,d)$ are weakly similar to $(S_4,\nu_4)$.

\item[(iii)] The space $(X,d)$ admits an isometric embedding into $(\mathbb R^+,d^+)$.
    \end{itemize}
\end{conjecture}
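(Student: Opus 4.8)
The plan is to split the proof into a local (four-point) part, which yields the equivalence $(i)\Leftrightarrow(ii)$ together with the easy implication $(iii)\Rightarrow(ii)$, and a global construction for the hard implication $(ii)\Rightarrow(iii)$. For the local part I would first observe that a four-point ultrametric space whose diametrical graph is isomorphic to $K_{1,3}$ has a unique ``far'' vertex $p$ with $d(p,a)=\operatorname{diam}$ for the other three points, while the remaining triangle is isosceles by the strong triangle inequality; hence it is either equilateral, giving a two-element distance set and weak similarity to $(Z_4,\mu_4)$, or has three distinct distances and is weakly similar to $(S_4,\nu_4)$. Since a weak similarity preserves both $|D(\cdot)|$ and, by Proposition~\ref{prop:3.4}, the isomorphism type of the diametrical graph, this gives $(i)\Leftrightarrow(ii)$. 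For $(iii)\Rightarrow(ii)$ I would note that four distinct points $a_1<a_2<a_3<a_4$ of $(\mathbb R^+,d^+)$ (with $d^+$ as in \eqref{reew}) have pairwise distances equal to the larger coordinate, so their diametrical graph is the star centred at $a_4$ and the three non-central distances are $a_2,a_3,a_3$ with $a_2<a_3$; thus every four-point subspace of $(\mathbb R^+,d^+)$ is weakly similar to $(S_4,\nu_4)$, a property inherited by any isometric copy.

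The substance is $(ii)\Rightarrow(iii)$, which I would prove by exhibiting the embedding explicitly. Define
\[
\phi(x):=\inf\{d(x,z):z\in X\setminus\{x\}\}\in\mathbb R^+,\qquad x\in X,
\]
(a minimum when $X$ is finite), and aim to show that, after one harmless correction, $x\mapsto\phi(x)$ is an isometric embedding into $(\mathbb R^+,d^+)$, i.e. $d(x,y)=\max\{\phi(x),\phi(y)\}=d^+(\phi(x),\phi(y))$ for all distinct $x,y$. Since $\phi(x)\le d(x,y)$ always, the required identity reduces to the key lemma that for distinct $x,y$ at least one of $\phi(x)=d(x,y)$, $\phi(y)=d(x,y)$ holds.

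I would prove this lemma by contradiction. If both $\phi(x)<d(x,y)=:D$ and $\phi(y)<D$, pick $u\ne x$ and $v\ne y$ with $d(x,u)<D$ and $d(y,v)<D$; the strong triangle inequality then forces $d(u,y)=d(v,x)=D$, and a short check shows that $x,y,u,v$ are pairwise distinct. Since $d(u,v)\le D$, their diametrical graph has edge set either $\{xy,yu,xv\}$, a path on four vertices, or $\{xy,yu,uv,vx\}$, the cycle $C_4$. The path is excluded because, by Theorem~\ref{thm:3.1}, the diametrical graph of a finite ultrametric space is complete multipartite, whereas a path on four vertices is not; and a four-point subspace with diametrical graph $C_4$ is excluded by $(ii)$. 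This proves the lemma, hence the $\max$-identity. For injectivity, note that $(ii)$ also forbids equilateral triangles (as $(S_4,\nu_4)$ has none and $|X|\ge4$); so if $\phi(x)=\phi(y)=:c$ with $x\ne y$, then $d(x,y)=c$, and for any other $z$ ruling out an equilateral triangle on $\{x,y,z\}$ gives $d(x,z)>c$ and hence $\phi(z)>c$. Such a twin pair therefore lies strictly below every other point and is equidistant from each of them; two disjoint twin pairs would force a diametrical graph $K_{2,2}$, isomorphic to $C_4$ by Corollary~\ref{renjh} and thus forbidden, while twin pairs sharing a vertex must coincide. Hence at most one twin pair exists, and lowering one of its members to any value in $[0,c)$ preserves the $\max$-identity (all other $\phi$-values exceed $c$) while making the map injective; this finishes $(ii)\Rightarrow(iii)$.

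The step I expect to be the main obstacle is the key lemma: squeezing the rigid coordinate identity $d(x,y)=\max\{\phi(x),\phi(y)\}$ out of a hypothesis stated only for four-point subspaces, via the ``path versus cycle'' dichotomy for the diametrical graph on $\{x,y,u,v\}$. The accompanying bookkeeping---when the infimum defining $\phi$ is not attained, when $X$ has no minimal point or has limit points, and the twin-pair correction---is delicate but, as sketched, should cause no genuine difficulty once the lemma is in hand.
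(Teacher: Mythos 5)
There is nothing in the paper to compare your argument against: the statement is Conjecture~\ref{gdgnij}, which the authors explicitly leave open (``The following hypothesis is very plausible''). So the only question is whether your proof stands on its own, and as far as I can check it does. The local part is sound: a four-point ultrametric space with diametrical graph $K_{1,3}$ consists of a unique ``far'' vertex together with an ultrametric-isosceles triangle all of whose sides are strictly below the diameter, and the dichotomy equilateral versus $b<c=c$ is exactly the dichotomy $(Z_4,\mu_4)$ versus $(S_4,\nu_4)$; Proposition~\ref{prop:3.4} and the count $|D(S_4)|\neq|D(Z_4)|$ close $(i)\Leftrightarrow(ii)$, and the computation of distances among four reals gives $(iii)\Rightarrow(ii)$. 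Your key lemma is the real content and it is correct: if $\phi(x)<D$ and $\phi(y)<D$ for $D=d(x,y)$, the witnesses $u,v$ are forced by the strong triangle inequality to produce a four-point set whose diametrical graph is $P_4$ or $C_4$, with degree sequences $(1,2,2,1)$ and $(2,2,2,2)$, neither of which is $K_{1,3}$; this contradicts $(ii)$ via Proposition~\ref{prop:3.4} (the appeal to Theorem~\ref{thm:3.1} for the path case is not even needed). The identity $d(x,y)=\max\{\phi(x),\phi(y)\}$ then makes $\phi$ an isometric embedding into $(\mathbb R^+,d^+)$ as soon as it is injective, and your twin-pair analysis correctly reduces the failure of injectivity to a single pair that can be perturbed.

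One step is misjustified, though the conclusion survives. You claim that two disjoint twin pairs would force a diametrical graph isomorphic to $K_{2,2}$; that is false. If $\{x,y\}$ and $\{x',y'\}$ are twin pairs with values $c$ and $c'$, the max-identity makes all four cross-distances equal to $\max\{c,c'\}$, so the diametrical graph of $\{x,y,x',y'\}$ is $K_4$ when $c=c'$ and $K_{1,1,2}$ when $c\neq c'$ --- never $K_{2,2}$. The repair is immediate and already implicit in your own text: $K_4$ and $K_{1,1,2}$ are likewise not isomorphic to $K_{1,3}$, so $(ii)$ still excludes them; alternatively, your earlier observation that every point outside a twin pair has $\phi$-value strictly above that pair's common value yields $c<c'$ and $c'<c$ simultaneously. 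With that sentence corrected, I see no gap, and you appear to have settled the conjecture affirmatively.
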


The ultrametric $d^+$ on $\mathbb{R}^+$ was introduced by Delhommé, Laflamme, Pouzet, and Sauer in \cite{DLPS2008TaiA}.
Some results related to the ultrametric space $ (\mathbb{R}^+, d^+) $ can be found in \cite{DK2024JMS,Dov2025Arx,Dov2024UPFAME,Isha2023,Isha2021,Isha2023-1}.

\section*{Declarations}

\subsection*{Declaration of competing interest}

 The authors declare no conflict of interest.

\subsection*{Data availability}

 All necessary data are included into the paper.

\subsection*{Funding}

First author was supported by grant $359772$ of the Academy of Finland.\\


\bibliographystyle{plainurl}
\bibliography{Forbidden}

\bigskip

CONTACT INFORMATION

\medskip
Oleksiy Dovgoshey\\
Department of Function Theory, Institute of Applied Mathematics and Mechanics of NASU, Slovyansk, Ukraine,\\
Department of Mathematics and Statistics, University of Turku, Turku, Finland \\
oleksiy.dovgoshey@gmail.com, oleksiy.dovgoshey@utu.fi

\medskip
Olga Rovenska\\
Department of Mathematics and Modelling, Donbas State Engineering Academy, Kramatorsk, Ukraine\\
rovenskaya.olga.math@gmail.com

\end{document}